\newtheorem{theorem}{Theorem}[section]
\newtheorem{corollary}[theorem]{Corollary}
\newtheorem{lemma}[theorem]{Lemma}
\newtheorem{proposition}[theorem]{Proposition}
\theoremstyle{definition}
\newtheorem{definition}[theorem]{Definition}
\newtheorem{remarks}[theorem]{Remarks}
\newtheorem{remark}[theorem]{Remark}
\newtheorem{question}[theorem]{Question}
\def\square{\Box}
\newcommand{\inj}{\hookrightarrow}
\def\id{{\sf id}}
 \def\ker{{\sf ker}}
\def\spec {\sf Spec}
\begin{document}
\title[Flatness and projectivity over Hopf subalgebras of Hopf algebras over Dedekind rings]{On the flatness and the projectivity over Hopf subalgebras of Hopf algebras over Dedekind rings}
\dedicatory{Herrn Prof. Dr. Bodo Pareigis zum 80. Geburtstag gewidmet}
\author[Nguy\^en Dai Duong]{Nguy\^en Dai Duong}
\email[Nguy\^en Dai Duong]{nguyendaiduongqn@yahoo.com.vn}
\address[Nguy\^en Dai Duong]{Institute of  Mathematics, Vietnam Academy of Science and Technology, Hanoi, Vietnam}

\author[Ph\`ung H\^o Hai]{Ph\`ung H\^o Hai}
\email[Ph\`ung H\^o Hai]{phung@math.ac.vn }
\address[Ph\`ung H\^o Hai]{Institute of  Mathematics, Vietnam Academy of Science and Technology, Hanoi, Vietnam}

\author[Nguy\^en Huy Hung]{Nguy\^en Huy Hung}
\email[Nguy\^en Huy Hung]{hungp1999@yahoo.com}
\address[Nguy\^en Huy Hung]{Hanoi Pedagogical University 2, Vinh Phuc, Vietnam}

\keywords{Hopf algebras over Dedekind rings; normal Hopf subalgebras; faithful flatness; faithful co-flatness}
\subjclass[2010]{16T05;   13D07; 18E10; 13F30}

\begin{abstract} We study the flatness and the projectivity of Hopf algebras, defined over a Dedekind ring, over their Hopf  subalgebras. We  give a criterion for the faithful flatness and use it to show the faithful flatness of an arbitrary flat Hopf algebra upon its finite normal Hopf subalgebras. For the projectivity over Hopf subalgebras of a Hopf algebra we need some finiteness condition in terms of the module of integrals. In particular we show that the module of integrals is projective of rank one. 
\end{abstract}
\maketitle
\parskip8pt
\section{Introduction} For Hopf algebras defined over a field, a conjecture of Kaplansky 
states that ``a Hopf algebra is free as a module over any Hopf subalgebra''. Although this was quickly
shown to be false in the infinite dimensional case, the finite dimensional case  is true and was proven  by Nichols and Zoeller \cite{NZ89b}. Schneider \cite{Sch93} showed that any Hopf algebra is free over its finite normal  Hopf subalgebras.

This work  is devoted to the study of the same questions for Hopf algebras defined over a Dedekind ring:
 when is a Hopf algebra, defined over a Dedekind ring, faithfully flat/projective over a Hopf subalgebra?
There are already many works devoted to Hopf algebras defined over a ring base. For instance 
Schneider \cite{Sch92} generalized Nichols-Zoeller's result to Hopf algebras over a local ring.
One of our aims is to generalize to the case of 
Hopf algebras over a Dedekind ring Schneider's result: a 
Hopf algebra is projective over a normal Hopf 
subalgebra. We haven't proved the complete 
generalization but still need an extra-condition 
on the finiteness of the Hopf algebra: the 
existence of non-zero integrals.  We show that the module 
of integrals on a projective (over the base Dedekind ring) 
Hopf algebra is projective of rank one (provided it is non-zero). Further we 
show that a projective Hopf algebra possesses a non-zero (left) integral if and only if it is projective as a 
(right) comodule over itself. These results are used to prove the projectivity of a projective Hopf algebra possessing a non-zero integral over a finite normal Hopf subalgebra.

Our approach is to rely on existing results for 
Hopf algebras over fields and to lift information on 
fibers to the global base. This method has been utilized for 
the study of Hopf algebras over rings. For 
instance Pareigis \cite{Par} has used this method 
to show the uniqueness of integrals on finite flat 
Hopf algebras over a Dedekind ring. Our new 
input is a homological lemma (Lemma \ref{flat0}) relating the flatness over the global base with the flatness on the fibers. We also make use of the correspondence between normal  Hopf subalgebras and co-normal quotient Hopf algebras, due originally to Takeuchi and Schneider as well as various equivalences of the categories of modules, comodules and Hopf modules.

The paper is organized as follows. In Section \ref{co-flat} we recall the Takeuchi-Schneider correspondence between normal Hopf subalgebras and co-normal quotients Hopf algebas of a given flat Hopf algebra over a Dedekind ring. This correspondence seems to be well-known by experts but we cannot find any reference suitable to our aim.
In Section \ref{Sect4} we provide the key technical lemma.

\noindent
\begin*{\bf Proposition \ref{flatness}. }\em 
  Let $A$  be an $R$-algebra and $B$ be an $A$-module such that both $A, B$ are flat over $R$.  Then   $B$ is  left faithfully flat over $A$
 if   and only if $B_k$ is left faithfully flat over $A_k$  for $k$ being the fraction field  and any residue field of $R$. 
\end*{}

As a consequence we show that a flat Hopf algebra over a Dedekind ring is flat over any finite saturated normal Hopf subalgebra (Theorem \ref{flatness2}). 

To prove the projectivity over normal Hopf subalgebras we will need some supplementary results, which will be presented in Section \ref{SecInt}. We study Hopf algebras equipped with a non-zero integral. First we consider the modules of integrals on the Hopf algebras in the settings of Theorem \ref{flatness2}.

\noindent\begin*{\bf Theorem \ref{B-C.int}. }\em
Assume that $A$ is an $R$-finite saturated normal Hopf subalgebra of an $R$-flat Hopf algebra $B$. Let $C:=B\otimes_AR$ be the quotient Hopf algebra. Then $B$ possesses a non-zero left (resp. right) integral iff $C$ does.
\end*{}
 
 Then we show that the module of integrals is projective of rank one over the base ring.

\noindent\begin*{\bf Proposition \ref{pro.IntMod}. }\em Let $H$ be an $R$-projective Hopf algebra.
Suppose $H$ possesses a non-zero integral. Then the module of  integrals on $H$ is  an $R$-projective module of rank one. Further the module of integrals respects base change, i.e. $(I^l_H)_k\cong I^l_{H_k}$ for $k$ being any residue field and
$(I^l_{H})_\mathfrak{m}\cong I^l_{H_\mathfrak{m}}$
for any prime ideals $\mathfrak m$.
\end*{}\\[1ex]
The proof is a simple utilization of the theory of rational modules as developed in \cite{Wis} and \cite{CM}.

Next we show that if $H$ possesses a non-zero integral then it is a projective comodule on itself.

\noindent\begin*{\bf Proposition \ref{Integral2}. } 
\em Let $H$ be  an $R$-projective Hopf algebra. Then  $H$ possesses a non-zero integral if and only if $H$ is projective in ${}_{H^*}\!\mathcal{M}$. In particular, if $H$ possesses a non-zero integral then it is projective in $\mathcal M^H$.
\end*{}\\

In Section \ref{Secpro} we show that an $R$-projective Hopf algebra  possessing a non-zero integral is projective over any of its $R$-finite saturated normal Hopf subalgebras.

\noindent\begin*{\bf Theorem \ref{thm.proj}. }\em 
Let $B$ be an $R$-projective Hopf algebra with  a non-zero left integral. Let $A$ be an $R$-finite saturated normal Hopf subalgebra of $B$.  Then $B$ is right  projective over $A$.   \end*{} \\[1ex]
The proof is similar to that of  \cite[Theorem 3.1]{Ps1}, we use several 
equivalences of \mbox{module-,} comodule- and Hopf module categories obtained before. 
This  technique was  utilized in \cite[Section 2]{S.CA}.
\section{Flatness and co-flatness} \label{co-flat}
 In this section we recall the Takeuchi-Schneider correspondence between normal Hopf subalgebras and conormal quotient Hopf algebras.  

Let $R$ be  a Dedekind ring.
In what follows, the tensor product, when not indicated, is understood as the tensor product over $R$.  
We shall frequently make use of the following facts about $R$-modules: a torsion-free module is flat (hence a submodule of a flat module is flat), a finite flat module is projective. For an $R$-module $M$, the torsion submodule is denoted by $M_\tau$, it consists of elements, each annihilated by a non-zero element of $R$. The quotient $M/M_\tau$ is torsion-free. The saturation of a submodule $N$ of $M$ is the preimage in $M$ of the torsion submodule of $M/N$.
$N$ is said to be saturated in $M$ if it is equal to its saturation in $M$.

A coalgebra (resp. Hopf algebra)  over $R$  is  called   $R$-flat (resp. $R$-projective,  $R$-finite) if it is   flat (resp.  projective,   finitely generated) as an $R$-module.

\begin{definition} \label{def_special} Let  $f:A\to B$ be a homomorphism of $R$-flat
 coalgebras.
\begin{enumerate}
\item If $f$ is injective, we shall say that $A$ is a subcoalgebra of $B$ and usually identify $A$ with a subset of $B$ by means of $f$. Notice that the map $f\otimes f:A\otimes A\to B\otimes B$ is injective, thus the coproduct of $A$ is the restriction of that of $B$.
\item $f$  is called saturated if $f(A)$ is saturated as an $R$-submodule in $B$.

\item If $f$ is injective and saturated we shall say that $A$ is a saturated subcoalgebra of $B$. \end{enumerate}
\end{definition}
As $R$ is a Dedekind ring  the image of a (saturated) homomorphism of $R$-flat coalgebras is a (saturated) $R$-flat subcoalgebra.

The following definition is motivated by \cite[Definition 1.1]{Sch93}.
\begin{definition} \label{defn2}  Let $f: A \longrightarrow B$  be a homomorphism of $R$-flat Hopf algebras.
\begin{enumerate}
 \item $f$ is  called normal if  for all $a \in A, b \in B$ we have
$$\sum b_1f(a)S(b_2) \in f(A)\quad \text{and}\quad\sum S(b_1)f(a)b_2  \in f(A).$$ If $f$ is injective then we say that $A$ (by means of $f$) is a normal Hopf subalgebra of $B$.
\item $f$ is called conormal if its kernel ${\sf Ker}(f)$ satisfies: for all  $a \in {\sf Ker}(f)$, we have  $$ \sum a_2 \otimes S(a_1) a_3,
\sum a_2 \otimes a_1 S(a_3) \in{\sf Ker}(f) \otimes A.$$
A Hopf ideal $I$ is called normal if it satisfies the above condition of ${\sf Ker}(f)$, in particular, if $I$ is also saturated then the quotient map $A\to A/I$ is conormal.
\end{enumerate}
\end{definition}

Let $f: A \longrightarrow B$ be a  homomorphism of $R$-flat Hopf algebras.
Set $$A^\text{co$(B)$}:= \{a \in A| \sum a_1 \otimes f(a_2)= a \otimes 1 \},$$ 
$${}^\text{co$(B)$}\!A:= \{a \in A| \sum f(a_1) \otimes a_2= 1 \otimes a \}.$$ 
 These are saturated $R$-submodules of $A$, as they can be characterized as the kernels of homomorphisms.
 
For a Hopf  algebra of $A$, we denote  $A^+:=\ker \varepsilon _A$, the augmentation Hopf ideal of $A$.  The proof of the following lemma is similar to that of  \cite[Lemma 1.3, p. 3342]{Sch93}.
\begin{lemma}\label{normal}   Let $f: A \longrightarrow B$ be a  homomorphism of $R$-flat Hopf algebras.
\begin{enumerate}
\item If $f$ is conormal then   $A^\text{co$(B)$}= {}^\text{co$(B)$}\!A$ is a normal  Hopf subalgebra of  $A$.
\item If $f$ normal then $I:= Bf(A)^+B=Bf(A)^+=f(A)^+B$ is a 
normal Hopf ideal  in $B$.
\end{enumerate}
\end{lemma}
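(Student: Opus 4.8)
I would prove the two statements in parallel, since they are dual; part~(2) goes through cleanly, while part~(1) reduces to a single coincidence of one-sided coinvariants, which is the real obstacle.

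For (1), the map $f$ endows $A$ with a right and a left $B$-comodule algebra structure, $\rho_r(a)=\sum a_1\otimes f(a_2)$ and $\rho_l(a)=\sum f(a_1)\otimes a_2$, whose invariants are $T:=A^{\mathrm{co}(B)}$ and $T':={}^{\mathrm{co}(B)}A$. Since $\rho_r$ is an algebra map, $T$ is a subalgebra; applying $\Delta_A\otimes\id$ to $\rho_r(a)=a\otimes 1$ and using that $A$ is $R$-flat (so that $A\otimes(-)$ preserves the equaliser defining $T$) gives $\Delta(T)\subseteq A\otimes T$, and symmetrically $\Delta(T')\subseteq T'\otimes A$. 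Applying $\varepsilon_A\otimes\id$ to $\rho_r(a)=a\otimes1$ shows $f(a)=\varepsilon(a)1$, hence $a-\varepsilon(a)1\in\ker f$, for every $a\in T$ (and likewise for $T'$). A short Sweedler computation — substituting the invariance of $t$ inside the product and then collapsing $\sum a_2S(a_3)$ by the antipode axiom — shows $\sum a_1tS(a_2)\in T$ and $\sum S(a_1)ta_2\in T$ for all $a\in A$, $t\in T$, with no hypothesis on $f$; thus $T$ is already stable under both adjoint actions.

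It remains to prove $T=T'$; granting this, $\Delta(T)\subseteq(A\otimes T)\cap(T\otimes A)=T\otimes T$ (the intersection computed via $R$-flatness and saturation) makes $T$ a sub-bialgebra, and $S(T)\subseteq T'=T$ makes it a sub-Hopf-algebra, which together with the adjoint stability above is exactly the assertion that $T$ is normal in $A$. For $T=T'$ I would first note the general fact that the antipode interchanges the two invariant algebras, $S(T)\subseteq T'$ and $S(T')\subseteq T$ (apply the antipode together with the flip to the defining relations). Conormality is then used to upgrade these inclusions to an equality: applying $f\otimes\id$ to the two conormality relations gives $\sum f(c_2)\otimes S(c_1)c_3=0$ and $\sum f(c_2)\otimes c_1S(c_3)=0$ for every $c\in\ker f$, and feeding $c=t-\varepsilon(t)1\in\ker f$ (for $t\in T$, resp.\ $t\in T'$) into these relations and combining with the invariance of $t$ yields, by a multi-leg Sweedler manipulation, the opposite invariance. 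This last step — transporting a conormality relation across the coproduct while keeping track of which tensor legs land in the coideal $\ker f$ — is the main obstacle, and is where the argument follows the pattern of \cite[Lemma~1.3]{Sch93}.

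Part~(2) is the dual and is direct. Normality gives the two rewritings $f(a)b=\sum b_1\bigl(S(b_2)f(a)b_3\bigr)$ and $bf(a)=\sum\bigl(b_1f(a)S(b_2)\bigr)b_3$, in which the bracketed factors lie in $f(A)$ by the two normality conditions and, when $a\in A^+$, in $f(A)^+$ (apply $\varepsilon$); hence $f(A)^+B\subseteq Bf(A)^+$ and $Bf(A)^+\subseteq f(A)^+B$, so $I=Bf(A)^+B=Bf(A)^+=f(A)^+B$ is a two-sided ideal. It is a coideal because $\Delta(a)\in a\otimes1+1\otimes a+A^+\otimes A^+$ for $a\in A^+$ forces $\Delta(f(a)b)\in I\otimes B+B\otimes I$ and $\varepsilon(f(a)b)=0$, and $S(I)\subseteq I$ since $S(A^+)\subseteq A^+$. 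Finally, for the normality of $I$ I would use that the adjoint coaction $\mathrm{ad}_r(x)=\sum x_2\otimes S(x_1)x_3$ is an algebra map while $I\otimes B$ is an ideal of $B\otimes B$, so it suffices to check the generators: $\mathrm{ad}_r(f(a))=\sum f(a_2)\otimes f\bigl(S(a_1)a_3\bigr)$ lies in $f(A)^+\otimes B\subseteq I\otimes B$ because $\sum a_2\otimes S(a_1)a_3\in A^+\otimes A$ for $a\in A^+$ (apply $\varepsilon\otimes\id$), and symmetrically for $x\mapsto\sum x_2\otimes x_1S(x_3)$.
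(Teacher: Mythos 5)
Your part~(2) contains a concrete error at the final step: the right adjoint coaction $\mathrm{ad}_r(x)=\sum x_2\otimes S(x_1)x_3$ is \emph{not} an algebra map in general. Since $S$ is an anti-homomorphism, $\mathrm{ad}_r(xy)=\sum x_2y_2\otimes S(y_1)S(x_1)x_3y_3$, with $S(y_1)$ to the \emph{left} of $S(x_1)$, which differs from $\mathrm{ad}_r(x)\,\mathrm{ad}_r(y)$ unless $B$ is cocommutative; so your reduction to the generators $f(a)$ does not go through as justified. The conclusion is nevertheless recoverable: $I=f(A)^+B$ is $R$-spanned by products $f(a)b$ with $a\in A^+$, and $\mathrm{ad}_r(f(a)b)=\sum\bigl(1\otimes S(b_1)\bigr)\,\mathrm{ad}_r(f(a))\,\bigl(b_2\otimes b_3\bigr)$, which lies in $I\otimes B$ because your (correct) computation puts $\mathrm{ad}_r(f(a))\in f(A)^+\otimes B$ and $I\otimes B$ is a two-sided ideal of $B\otimes B$; linearity then finishes it, and for the other coaction $x\mapsto\sum x_2\otimes x_1S(x_3)$ one must analogously use the \emph{other} presentation $I=Bf(A)^+$ and generators $bf(a)$. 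So part~(2) is a fixable misstatement, not a wrong approach; your rewritings establishing $Bf(A)^+=f(A)^+B$ and the coideal/antipode stability (via the splitting $A=R1\oplus A^+$, which you correctly exploit over the Dedekind base) are sound.

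Part~(1) has two problems, one of which is the heart of the matter. First, your claim that \emph{both} adjoint stabilities of $T=A^{\mathrm{co}(B)}$ hold ``with no hypothesis on $f$'' is wrong for the second one: applying $\rho_r$ to $\sum S(a_1)ta_2$ yields $\sum S(a_2)ta_3\otimes f(S(a_1)a_4)$, in which the legs $a_1$ and $a_4$ are \emph{not} adjacent and do not collapse (the collapse $\sum f(a_2S(a_3))=\varepsilon(a_2)1$ only rescues $\sum a_1tS(a_2)$); the mirrored computation proves stability of $T'={}^{\mathrm{co}(B)}A$ under $\sum S(a_1)(\cdot)a_2$, so both stabilities for a single object are only available \emph{after} $T=T'$ is known. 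Second, and decisively, your proof of $T=T'$ is not actually carried out: the antipode inclusions $S(T)\subseteq T'$, $S(T')\subseteq T$ are correct but do not give equality, and the promised ``multi-leg Sweedler manipulation'' transporting the conormality relations $\sum f(c_2)\otimes S(c_1)c_3=0$ across the coproduct is precisely the nontrivial content of \cite[Lemma~1.3]{Sch93} — the paper itself proves the lemma only by reference to that argument, so this is the one step a self-contained proof must execute (in particular one must track $\Delta(\ker f)\subseteq \ker f\otimes A+A\otimes\ker f$ through iterated legs), and you have flagged it but left it as a black box. As it stands, part~(1) is an accurate reduction of the problem to its known core rather than a proof of it.
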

 
For $A\subset B$ a saturated normal Hopf subalgebra, it is not necessary  that the Hopf ideal $A^+B$ is saturated in $B$. However it will be the case if $B$ is faithfully flat as a (left or right)  $A$-module, see \ref{f-co-flat}.

\subsection{}\label{subsection2.1}
Let $C$ be an $R$-flat coalgebra. Then the category $\mathcal M^C$ of right $C$-comodules is abelian. Similarly, the category ${}^C\mathcal M$ of left $C$-comodules is abelian. Let $M\in\mathcal M^C$ and $N\in {}^C\mathcal M$. The co-tensor product $M\square_CN$ is defined as an equalizer:
\begin{equation}\label{eq1}\xymatrix{
0\ar[r]& M\square_CN\ar[r]& M\otimes N\ar[rrr]^{\rho_M\otimes N-M\otimes\rho_N}&&& M\otimes C\otimes N,}
\end{equation}
where $\rho_M$, $\rho_N$ are respectively the coactions of $C$ on $M$ and $N$. We say that $M$ is (right) co-flat over $C$ if the functor $M\square -:{\mathcal M}^C \to \mathcal M_R$ is  exact (this functor is left exact if $M$ is $R$-flat).

The special case of importance is when an $R$-flat coalgebra $B$ is consider as left and right $C$-comodule by means of a coalgebra map $B\to C$. Then for any $M\in\mathcal M^C$, $M\square_CB$ is a right $B$-comodule in  the natural way. In fact, by the $R$-flatness of $B$ we have the following commutative diagram with exact rows:
$$\xymatrix{
0\ar[r]&M\square_CB\ar[r]\ar[d]& M\otimes B
\ar[rr]^{\rho_M\otimes B-M\otimes\lambda_B}\ar[d]_{M\otimes \Delta}&& M\otimes C\otimes B\ar[d]^{M\otimes C\otimes \Delta}\\
0\ar[r]&(M\square_CB)\otimes B\ar[r]&
M\otimes B\otimes B\ar[rr]&&M\otimes C\otimes B\otimes B,}$$
where $\rho_M$ is the coaction of $C$ on $M$ and $\lambda_B$ is the left coaction of $C$ on $B$ given by
$$\xymatrix{\rho_B:B\ar[r]^\Delta& B\otimes B\ar[r]&C\otimes B.}$$
Hence we have a left exact functor
$-\square B:\mathcal M^C\to \mathcal M^B$, called the induction functor. This functor is right adjoint to the restriction functor $\mathcal M^B\to\mathcal M^C$:
\begin{equation}\label{cotensor_adj}{\sf Hom}^C(M,N)\cong {\sf Hom}^B(M,N \square_CB),\quad \text{for $M\in\mathcal M^B$, $N\in \mathcal M^C$.}\end{equation}
The isomorphism above is given by composing a morphism on the left hand side with the map $\rho_M:M\to M\square_CB\subset M\otimes B$:
$$f\mapsto g:=(f\otimes{\sf id})\rho_M.$$
\subsection{}\label{subsection2.2}
Let $A$ be an $R$-flat algebra and $C$ be an $R$-flat coalgebra. A right $(C,A)$-bimodule is an $R$-module $M$ equipped with a left action of $A$, and a right coaction of $C$, say $\rho$, such that
$$\rho(a m)=a\rho(m),\text{ for $a\in A, m\in M$}.$$
Similary, we define left $(C,A)$-bimodules by switching the left and the right (co)actions: $A$ acts on the right and $C$ coacts on the left, by $\lambda$, satisfying $\lambda(m a)=\lambda(m)a$,  for $a\in A, m\in M$. 

Let $M$ be a right $(C,A)$-bimodule. Then for any   left $C$-comodule $N$, there exists a natural action of $A$ on $M\square_CN$ given by the action of $A$ on $M$. Dually, for any right $A$-module $P$, $C$ coacts on $P\otimes_AM$ through the coaction on $M$. 

Assume now that $P$ is right $A$-flat. Then we have a canonical isomorphism
\begin{equation}\label{eq2}P\otimes_A(M\square_CN)\xrightarrow{\sim}
(P\otimes_AM)\square_CN,\end{equation}
obtained by tensoring the exact sequence \eqref{eq1} with $P$ over $A$. We notice that, by the flatness of $P$ over $A$, these two spaces are subspaces of $P\otimes_AM\otimes N$.

Dually, if $N$ is co-flat over $C$, then \eqref{eq2} holds for any $A$-module $P$. Indeed, let $P_1\to P_0\to P\to 0$ be an $A$-free resolution of $P$, then by means of the co-flatness of $N$ over $C$ we have the exactness of the lower sequence in the diagram below forcing the rightmost vertical morphism to be bijective:
$$\xymatrix{
P_1\otimes _A(M\square_CN)\ar[r]\ar[d]_\cong&P_0\otimes _A(M\square_CN)\ar[r]\ar[d]_\cong&P\otimes _A(M\square_CN)\ar[r]\ar[d]&0\\
(P_1\otimes_AM)\square_CN\ar[r]&
(P_0\otimes_AM)\square_CN\ar[r]&
(P\otimes_AM)\square_CN\ar[r]&0}$$

\subsection{}\label{subsection2.3} There is a mirrored version of all claims in \ref{subsection2.2}, in which ``left'' and ``right'' are interchanged. We shall frequently use them later on. For instance, we have
\begin{equation}\label{eq2refl}
(N\square_CM)\otimes_AP\xrightarrow\sim
N\square_C(M\otimes_AP),\end{equation}
if $P$ is left $A$-flat or $N$ is right $C$-co-flat.
\subsection{}\label{subsection2.4} Let $f:A\to B$ be a homomorphism of $R$-flat Hopf algebras. A  Hopf $(B, A)$-module is an $R$-module $M$ equipped with a right $B$-comodule structure and a right $A$-module structure, such that   the comodule structure map $\rho_M: M \longrightarrow M \otimes B$ is $A$-linear, where $A$ acts diagonally on $M\otimes B$. Explicitly we have
$$\rho(ma)=\sum_{m, a} m_0a_1\otimes m_1f(a_2),\quad m\in M, a\in A.$$
 Morphisms of Hopf modules are those maps of the underlying $R$-modules which are both $A$-linear and $B$-colinear. Denote by $\mathcal{M}^B_A$ the category of Hopf $(B, A)$-modules.

We shall be interested in two cases: either $f$ is a normal inclusion of Hopf algebras or $f$ is a conormal quotient map of Hopf algebras.

\subsection{Normal inclusion of Hopf algebras} 
Let $A$ be a normal Hopf 
subalgebra of a flat Hopf algebra $B$. 

Assume that $C:=B/BA^+$ is $R$-flat, i.e. the ideal $A^+B=BA^+$ is saturated in $B$. Then the quotient map $\pi:B\to C$ is 
conormal (Lemma \ref{normal}~(ii)). Further, $B$ is a {\em left} $(C,A)$-bimodule with respect to the natural (co)actions.
For $N \in \mathcal{M}^{C}$, there is a Hopf $(B,A)$-module structure on  $N\square_{C}B $, where the coaction of $B$
is the induced coaction, as in \ref{subsection2.1}, and the action of $A$ is induced from the action on $B$, as in \ref{subsection2.3}.
Thus we have functor
$$\Psi:  \mathcal{M}^{C}\longrightarrow \mathcal{M}^B_A,\quad \Psi (N)=N \square_{C}B.$$
Regarding $R$ as a left $A$-module by means of $\varepsilon_A$, we can construct a functor: $$\Phi:\mathcal{M}^B_A
\longrightarrow \mathcal{M}^{C},\quad 
M\longmapsto  M\otimes_AR.$$
 \begin{lemma}\label{lem.adj1} Assume that $C:=B/BA^+$ is $R$-flat. Then the functor $\Psi$ is right adjoint to the functor
$ \Phi$: 
$${\sf Hom}^C(M\otimes_AR,N)\cong{\sf Hom}^B_A(M,N\square_CB).$$
The adjunctions are induced from the maps $\rho_M:M\to M\square_CB$ and   $\varepsilon_B:N\square_CB\to N$. 
\end{lemma}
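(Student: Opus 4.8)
The plan is to obtain the asserted bijection by restricting the comodule adjunction \eqref{cotensor_adj} to the appropriate subobjects and checking that, under it, the module-theoretic conditions on the two sides correspond. Concretely, I would first forget the $A$-module structure on $M$ and regard it merely as an object of $\mathcal M^B$ (hence of $\mathcal M^C$ by restriction along $\pi$). Then \eqref{cotensor_adj} gives a bijection
$$\mathrm{Hom}^C(M,N)\xrightarrow{\ \sim\ }\mathrm{Hom}^B(M,N\square_C B),\qquad \varphi\longmapsto (\varphi\otimes\id)\rho_M,$$
with inverse $g\mapsto(\id_N\otimes\varepsilon_B)g$. It then remains to see that this bijection carries the subset $\mathrm{Hom}^C(M\otimes_AR,N)$ onto the subset $\mathrm{Hom}^B_A(M,N\square_CB)$.

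For the left-hand side, I would use that $M\otimes_AR=M/MA^+$ and that the canonical projection $p\colon M\to M\otimes_AR$ is $C$-colinear, this being exactly how the $C$-comodule structure on $\Phi(M)$ was defined. Hence precomposition with $p$ identifies $\mathrm{Hom}^C(M\otimes_AR,N)$ with the set of those $\varphi\in\mathrm{Hom}^C(M,N)$ that vanish on $MA^+$, equivalently those satisfying $\varphi(ma)=\varepsilon_A(a)\varphi(m)$ for all $a\in A$. On the right-hand side, $\mathrm{Hom}^B_A(M,N\square_CB)$ is by definition the set of $A$-linear maps inside $\mathrm{Hom}^B(M,N\square_CB)$.

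The heart of the argument is then to match these two subsets. Writing $\rho_M(m)=\sum m_0\otimes m_1$ and $g=(\varphi\otimes\id)\rho_M$, so that $g(m)=\sum\varphi(m_0)\otimes m_1$, I would invoke the Hopf-module axiom $\rho_M(ma)=\sum m_0a_1\otimes m_1f(a_2)$ together with the counit identity. If $\varphi$ vanishes on $MA^+$, then $\varphi(m_0a_1)=\varepsilon_A(a_1)\varphi(m_0)$, whence $g(ma)=\sum\varepsilon_A(a_1)\varphi(m_0)\otimes m_1f(a_2)=\sum\varphi(m_0)\otimes m_1f(a)=g(m)\cdot a$, so that $g$ is $A$-linear. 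Conversely, applying $\id_N\otimes\varepsilon_B$ to the identity $g(ma)=g(m)\cdot a$ and using $\varepsilon_B(bf(a))=\varepsilon_B(b)\varepsilon_A(a)$ recovers $\varphi(ma)=\varepsilon_A(a)\varphi(m)$, i.e.\ $\varphi$ kills $MA^+$. This shows the two subsets correspond under \eqref{cotensor_adj}, and by construction the resulting bijection is induced by $\rho_M$ and $\varepsilon_B$, as asserted.

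I expect the only real subtlety to be bookkeeping rather than any deep step: one must check that the $C$-comodule structure appearing in \eqref{cotensor_adj} (restriction of the $B$-coaction along $\pi$) is literally the one that descends to $\Phi(M)=M\otimes_AR$, and that the $A$-action on $N\square_CB$ used on the right is the one acting through the $B$-factor. It is worth noting that normality and conormality are not needed for the bijection itself; they enter only in guaranteeing (as established before the lemma) that $\Phi$ and $\Psi$ are well defined as functors into $\mathcal M^C$ and $\mathcal M^B_A$. Once those functors are granted, the adjunction is a formal consequence of the matching of conditions carried out above.
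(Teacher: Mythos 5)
Your proof is correct and takes essentially the same approach as the paper: both restrict the induction adjunction \eqref{cotensor_adj} to identify $A$-linearity of $g=(\varphi\otimes\id)\rho_M$ with the condition $\varphi(ma)=\varepsilon_A(a)\varphi(m)$, i.e.\ with $\varphi$ factoring through $q_M:M\to M\otimes_AR$, applying $\varepsilon_B$ in one direction and the Hopf-module axiom in the other. The only difference is presentational: you spell out explicitly the implication the paper dismisses as holding ``by the same reason.''
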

\begin{proof}
According to \ref{subsection2.1} we have a functorial isomorphism
$${\sf Hom}^C(M,N)\cong {\sf Hom}^B(M,N\square_CB),\quad M\in \mathcal M^B, N\in\mathcal M^C,$$
given by composing a morphism on the left hand side with the map $\rho_M:M\to M\square_CB$:
$f\mapsto g:=(f\otimes B)\rho_M.$
Thus we see that if $g$ is $A$-linear, i.e., for $a\in A$, $m\in M$ we have
$$\sum f(m_0a_1)\otimes m_1a_2=\sum f(m_0)\otimes m_1a,$$
then, applying $\varepsilon$ on the second tensor factor, we get
$$f(ma)=\varepsilon(a)f(m).$$
That is, $f: M\to N$ factors as the composition of $\bar f:M\otimes_AR\to N$ and the quotient map $q_M:M\to M\otimes_AR$. The converse also holds by the same reason. Thus the two maps $g$ and $\bar f$ are related by the following diagram
$$\xymatrix{M\ar[r]^{q_M}\ar[d]_g\ar[rd]_f&
M\otimes_AR\ar[d]^{\bar f} \\
N\square_CB\ar[r]_{\varepsilon_B}&N.}$$
The adjunctions are obtained from this diagram for $f$ being the quotient map $q_M:M\to M\otimes_AR$, $M\in\mathcal M^B_A$ and the map $\sigma_B:  N\square_CB\to N$, $N\in\mathcal M^C$.
\end{proof}

The following proposition generalizes \cite[Theorem 1]{T1} see also \cite[Lemma 2.4.1]{Ps}.
\begin{proposition}\label{thm_T1} Let $A$ be a normal Hopf subalgebra of an $R$-flat Hopf algebra $B$, $C:=B/BA^+$. Assume that $B$ is faithfully flat as a left $A$-module. Then $A$ is a saturated Hopf subalgebra of $B$, $C$ is $R$-flat and the functors $\Phi$ and $\Psi$ establish an equivalence between categories $\mathcal{M}^B_A$ and $\mathcal{M}^{C}$.  Furthermore, $B$ is left faithfully co-flat over $C$.\end{proposition}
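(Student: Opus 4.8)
The plan is to dispose of the two ring-theoretic assertions first and then to reduce the categorical equivalence to the field case by faithfully flat descent. For the saturation of $A$, I would use that a faithfully flat ring extension $A\to B$ is a pure monomorphism of $A$-modules and that this purity passes to the underlying $R$-module structures (for an $R$-module $X$ the map $A\otimes_R X\to B\otimes_R X$ is the $A$-purity map of the $A$-module $A\otimes_R X$); since $B$ is $R$-flat, the long exact $\mathrm{Tor}$-sequence then forces $\mathrm{Tor}^R_1(B/A,-)=0$, so $B/A$ is $R$-flat and $A$ is saturated. For the $R$-flatness of $C$, note $C=B\otimes_A R$, where $R$ is the $A$-module $A/A^+$; resolving an arbitrary $R$-module $X$ by $R$-flats $F_\bullet$ and using the associativity $(B\otimes_A R)\otimes_R F_\bullet\cong B\otimes_A F_\bullet^{\varepsilon}$ (with $F_\bullet^{\varepsilon}$ carrying the $A$-action through $\varepsilon_A$), the flatness of $B$ over $A$ gives $\mathrm{Tor}^R_{>0}(C,X)=0$, so $C$ is $R$-flat. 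At this point $\pi\colon B\to C$ is conormal by Lemma~\ref{normal}(ii) and the adjoint pair $(\Phi,\Psi)$ of Lemma~\ref{lem.adj1} is available.

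Next I would isolate the two pieces of Hopf--Galois data that the descent argument consumes: the equality $A=B^{\mathrm{co}(C)}$ of coinvariants (for the right coaction $b\mapsto\sum b_1\otimes\pi(b_2)$) and the bijectivity of the canonical map $\mathrm{can}\colon B\otimes_A B\to B\otimes C$, $b\otimes b'\mapsto\sum bb'_1\otimes\pi(b'_2)$. Both are assertions about the $R$-flat modules $A,B,C$, so I would check them on the fibres. Base change along $R\to k$ preserves normality (it is cut out by identities) and faithful flatness (base change of a faithfully flat module is faithfully flat), so each fibre $A_k\subset B_k$ is a normal, faithfully flat Hopf subalgebra, and \cite[Theorem~1]{T1} gives $A_k=B_k^{\mathrm{co}(C_k)}$ and $\mathrm{can}_k$ bijective. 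The inclusion $A\subseteq B^{\mathrm{co}(C)}$ has torsion-free cokernel (a submodule of $B/A$) which vanishes generically, hence vanishes; and $\mathrm{can}$ is a homomorphism of $R$-flat modules that is bijective on every fibre, hence bijective. The lifting principle I rely on is that a homomorphism between $R$-flat modules that becomes bijective after $-\otimes_R K$ and after $-\otimes_R k(\mathfrak m)$ for every maximal $\mathfrak m$ is itself bijective: injectivity is generic, and for surjectivity the cokernel $Z$ is torsion with $\mathrm{Tor}^R_1(Z,k(\mathfrak m))=0$ for all $\mathfrak m$, which forces $Z=0$.

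With this data I would show that the unit $\eta_M\colon M\to(M\otimes_A R)\square_C B$ and the counit $\epsilon_N\colon(N\square_C B)\otimes_A R\to N$ of Lemma~\ref{lem.adj1} are isomorphisms by descent along $A\to B$: since $B\otimes_A(-)$ is faithful and exact it reflects isomorphisms, and after applying it the bijectivity of $\mathrm{can}$ trivialises the Hopf-module structures, so the base-changed unit and counit become manifest isomorphisms assembled from $\mathrm{can}$ and $A=B^{\mathrm{co}(C)}$; this is the argument of \cite[Theorem~1]{T1} carried out over $R$, legitimate because it uses only the Hopf-algebra axioms and faithful flatness, not that the base is a field. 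Hence $\Phi$ and $\Psi$ are mutually inverse equivalences $\mathcal M^B_A\simeq\mathcal M^C$. Faithful coflatness of $B$ over $C$ is then read off: $\Psi=-\square_C B\colon\mathcal M^C\to\mathcal M^B_A$ is an equivalence, hence exact and faithful, and composing with the exact faithful forgetful functor $\mathcal M^B_A\to\mathcal M_R$ shows $-\square_C B$ (equivalently, by the mirrored construction of \S\ref{subsection2.3}, $B\square_C-$) is exact and faithful, i.e. $B$ is left faithfully co-flat over $C$.

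I expect the main obstacle to be the transition from the fibres back to $R$. The delicate point is that the residue fields $k(\mathfrak m)=R/\mathfrak m$ are not $R$-flat, so kernels and coinvariants do not a priori commute with $-\otimes_R k(\mathfrak m)$; the remedy is to secure in advance the $R$-flatness of every module in sight ($B$, $C$, $B^{\mathrm{co}(C)}$, $B/A$, $B\otimes_A B$), so that the defining sequences are fibre-exact and the flat-module lifting principle applies. A secondary care is the left/right bookkeeping: the hypothesis is stated for the left $A$-module structure, so I would invoke the antipode together with the normality $A^+B=BA^+$ to pass to the right-handed versions, allowing the category $\mathcal M^B_A$ and the mirrored cotensor identities of \S\ref{subsection2.3} to be used. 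Once this flatness bookkeeping is in place, the descent step itself is formal.
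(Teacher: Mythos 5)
Your plan has one step that fails as written: the proof that $C$ is $R$-flat. You compute ${\rm Tor}^R_{>0}(C,X)$ via $(B\otimes_AR)\otimes_RF_\bullet\cong B\otimes_AF_\bullet^{\varepsilon}$ and then invoke ``the flatness of $B$ over $A$'' to conclude exactness. But here $F_\bullet^{\varepsilon}$ is a complex of \emph{left} $A$-modules, so the functor $B\otimes_A-$ is exact precisely when $B$ is flat as a \emph{right} $A$-module, whereas the hypothesis is faithful flatness of $B$ as a \emph{left} $A$-module. Your suggested remedy --- passing to the right-handed version via the antipode and $A^+B=BA^+$ --- does not work at this stage: the antipode of $B$ is not known to be bijective (in this paper that is only obtained much later, Corollary \ref{S.inj}, under an integral hypothesis), and without bijectivity of $S$ one cannot transport (faithful) flatness from one side to the other. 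Nor can you quote right faithful flatness from the Takeuchi--Schneider correspondence, since that is exactly what Proposition \ref{T_2} \emph{derives} downstream of the statement you are proving --- the appeal would be circular. The repair is the paper's own device: the untwisting isomorphism $\gamma_{A,B}:B\otimes_AB\xrightarrow{\sim}C\otimes B$ of \eqref{eq_gamma} exists for purely formal Hopf-module reasons (it has an explicit inverse, no flatness of $C$ or right flatness over $A$ needed); since $B\otimes_AB$ is $R$-flat (using only left $A$-flatness, via $(B\otimes_AB)\otimes_RM\cong(B\otimes_RM)\otimes_AB$) and $B$ is faithfully flat over $R$ (the counit splits the unit), one reads off the $R$-flatness of $C$. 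Your saturation argument via purity and the long exact ${\rm Tor}$-sequence is correct and is essentially the paper's argument in different clothing.

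Once $C$-flatness is repaired, the rest of your plan is sound but partly redundant relative to the paper. Your core descent step --- tensor the unit and counit with $B$ over $A$ (on the correct side: it is $(-)\otimes_AB$, not $B\otimes_A(-)$, for $M\in\mathcal M^B_A$, and this side needs only the given left flatness) and trivialise via the canonical map --- \emph{is} the paper's proof, i.e.\ Takeuchi's argument carried out over $R$ using $\gamma_{A,M}$ and \eqref{eq.Psi}; note the paper handles the counit not by tensoring it directly (its source is a mere $R$-module) but through the correspondence $\bar f\leftrightarrow g$ of Lemma \ref{lem.adj1}, applied with $g=\mathrm{id}$. By contrast, your fiberwise verification of the Galois data is unnecessary: bijectivity of $\mathrm{can}=\gamma_{A,B}$ holds over any commutative ring with no flatness input at all, and $A=B^{{\rm co}(C)}$ is not needed for this proposition (the paper establishes it only later, in Proposition \ref{f-co-flat}). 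Your flat-lifting principle for maps of $R$-flat modules is correct as stated, but routing the argument through fibers both adds bookkeeping (you must already know $C$ is $R$-flat to identify $C_k$ and to apply the principle to $C\otimes B$) and needlessly ties the proof to the Dedekind base, whereas the paper's global argument works over an arbitrary commutative ring (Remark \ref{rmk1}).
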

\begin{proof} The faithful flatness of $B$ over $A$ implies that $A$ is saturated in $B$. Indeed, tensoring the exact sequence
$$0\to A\to B\to B/A\to 0$$
on the right with $B$ over $A$ we obtain a split exact sequence (the splitting is given by the map $B\otimes_AB\to B$, $m\otimes n\mapsto mn$). By assumption $B$ is $R$-flat, hence so is  $B\otimes_AB$. Consequently $B/A\otimes_AB$ is $R$-flat. Now the faithful flatness of $B$ over $A$ implies that $B/A$ is $R$-flat, that is, $A$ is saturated in $B$ as an $R$-module.

    For each $M \in \mathcal{M}^B$, we have an  isomorphism
$$ \gamma_M: M \otimes B \cong M \otimes B,\quad \gamma_M (m \otimes b) = \sum m_{(0)}\otimes m_{(1)}b ,$$ 
with the inverse given by $m\otimes b \longmapsto \sum m_{(0)}\otimes S(m_{(1)})b$.

For $M \in \mathcal{M}^B_A$, consider $M\otimes A$ as a $B$-comodule by the diagonal coaction, we have the following diagram:
\begin{equation} \label{eq_gamma}\xymatrix{
M\otimes A\otimes B\ar[d]_{\gamma_{M\otimes A}}
\ar[rrr]^{r_M\otimes B-M\otimes l_B}&&&
M\otimes B\ar[d]_{\gamma_M}\ar[r]&M\otimes_AB
\ar[d]^{\gamma_{A,M}}\ar[r]&0\\
(M\otimes A)\otimes B\ar[rrr]_{r_M\otimes B-M\otimes\varepsilon_A\otimes B}&&&
M\otimes B\ar[r]_{q_M\otimes B\quad\  }
&(M\otimes_AR)\otimes B\ar[r]&0,}
\end{equation}
where, $r_M$, $l_B$ denote the actions of $A$ and $\varepsilon_A$ denotes the counit of $A$. Consequently we get an isomorphism 
$$\gamma_{A,M}: M \otimes_A B \xrightarrow\sim(M\otimes_AR) \otimes B=\Phi(M)\otimes B.$$  
In particular, for $M=B$ we have
\begin{equation}\label{eq_canonical}
\gamma_{A,B}:B \otimes_A B  \xrightarrow\sim  C \otimes B.\end{equation}
Now $B\otimes_AB$ is $R$-flat, that is $C\otimes B$ is $A$-flat. The faithful flatness of $B$ over $R$ implies that $C$ is $R$-flat.

Further, by the flatness of $B$ over $A$ and $R$, using \eqref{eq2refl}, we have, for $M \in \mathcal{M}^B_A$ and $N \in \mathcal{M}^{C}$,
\begin{equation}\label{eq.Psi} \Psi(N)\otimes_A B= (N\square_CB)\otimes_AB\stackrel{\eqref{eq2refl}}{=} 
N\square_C(B\otimes_AB)
\stackrel{\gamma_{A,B}}\cong N\square_C(C\otimes B)\cong
 N \otimes B,\end{equation}
 $$\sum_in_i\otimes b_i\otimes b\longmapsto \sum_i n_i\otimes \pi(b_{i(1)})\otimes b_{i(2)}b  \longmapsto \sum_i n_i\otimes b_ib.$$
  
Suppose that $\bar f: M\otimes_AR\longrightarrow N$ in 
$\mathcal{M}^{C}$ corresponds to $g: M 
\longrightarrow N\square_CB$ in 
$\mathcal{M}^B_A$. As shown in the proof of 
Lemma \ref{lem.adj1}, both morphisms come 
from a morphism $ f:M\to N$ in 
$\mathcal{M}^{C}$. Tensoring the diagram 
there with $B$  we get the following commutative diagram:
\begin{displaymath} \xymatrix{M \otimes B \ar[r]^{\gamma_M} 
\ar[d]_{g\otimes \id_B}&
M \otimes B \ar[r]^{q_M\otimes b} \ar[d]^{g\otimes \id_B}&
(M\otimes_AR)\otimes B \ar[d]^{{\bar f} \otimes \id_B }\\ (N\square_CB) \otimes B \ar[r]_{\gamma_{N\square_CB}}& (N\square_CB) \otimes B \ar[r]_{N\otimes\varepsilon_B\otimes B}&  N \otimes B.} \end{displaymath} By means of the right commutative diagram in \eqref{eq_gamma} we obtain the following commutative diagram:
\begin{displaymath}
\xymatrix{M \otimes_ A B \ar[r]^{\gamma_{A,M}} 
\ar[d]^{g\otimes \id_B}& 
(M\otimes_AR)\otimes B
\ar[d]^{{\bar f} \otimes \id_B }\\
(N\square_CB) \otimes_A B \ar[r]^{\simeq}& 
N \otimes B,} \end{displaymath}
where the lower horizontal map is nothing but the isomorphism in \eqref{eq.Psi}.
Since $B$ is faithfully flat over $A$ and $R$, $\bar f$ 
is  an isomorphism iff $g$ is.
Consequently, the adjunctions 
$M\longrightarrow \Psi\Phi(M)= 
(M\otimes_AR)\square_{C}B$,  
and $\Phi\Psi(N)= (N\square_{C}B)\otimes_AR \longrightarrow N$, are isomorphisms.
Thus $\Phi$ and $\Psi$ are equivalences.

Finally, the isomorphism in \eqref{eq.Psi} shows that the functor $(-\square_CB)\otimes_AB$ is fully faithful, as $B$ is faithfully flat as an $A$-module. Thus we conclude that $B$ is faithfully co-flat as a left $C$-comodule.
\end{proof}

\begin{remarks}\label{rmk1}
The experts may notice that the assumption of Proposition \ref{thm_T1} is too restrictive, compared with the original result of Takeuchi. In fact, it suffices to assume that ``$A$ is a right coideal subalgebra in $B$ and $B$ is faithfully flat as a left $A$-module", furthermore ``$R$ can be any commutative ring". The proof is the same and we leave it to the interested reader to carry out.
\end{remarks}

\subsection{Conormal quotient maps}
Consider now the dual situation.  Let $B\to C$, $b\mapsto \overline b$, be a conormal quotient map of $R$-flat Hopf algebras. Let
$$A:=B^\text{co$(C)$}.$$
If $T \in \mathcal{M}_A$, then $T \otimes_AB$ is in $\mathcal{M}^ {C}_B$, where the $C$-comodule structure is given by that on $B$:
$$t \otimes b \mapsto  \sum_{b} t \otimes b_1 \otimes \overline b_2,\quad \text{ for $b \in B, t \in T$}.$$ 
This yields a functor 
$-\otimes_A B:\mathcal{M}_A\longrightarrow \mathcal{M}^ {C}_B$.
\begin{lemma}  The functor 
$(-)^\text{co$(C)$}: \mathcal{M}^ {C}_B \longrightarrow    \mathcal{M}_A, Q \longmapsto Q^\text{co$(C)$} $
is right adjoint to $-\otimes_{A}B$:
$${\sf  Hom}_B^{C}(T \otimes _A B, Q)  \cong  {\sf Hom}_A(T, Q^\text{co$(C)$}).$$ 
The  adjunctions are given by
$$\eta: T \longrightarrow (T \otimes_AB)^\text{co$(C)$}, \quad t \mapsto t \otimes 1,$$ 
$$\zeta: Q^\text{co$(C)$}\otimes_AB\longrightarrow Q,\quad q \otimes b \mapsto qb.$$
\end{lemma}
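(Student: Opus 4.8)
The plan is to establish the asserted natural bijection directly, by producing two mutually inverse assignments on Hom-sets, and then to observe that the resulting unit and counit are exactly the maps $\eta$ and $\zeta$ displayed in the statement. First I would check that $\eta$ and $\zeta$ are well-defined morphisms in the relevant categories. The single fact driving every verification is that $A=B^{\text{co}(C)}$ consists of $C$-coinvariant elements. Using it one sees that $Q^{\text{co}(C)}$ is stable under the right $B$-action restricted to $A$: for $q\in Q^{\text{co}(C)}$ and $a\in A$ the Hopf-module axiom in $\mathcal{M}^C_B$ gives $\rho_Q(qa)=\sum q_0a_1\otimes q_1\overline{a_2}=\sum qa_1\otimes\overline{a_2}=qa\otimes 1$, so $Q^{\text{co}(C)}$ is a right $A$-module. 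For $\eta$, the $C$-coaction $t\otimes b\mapsto\sum t\otimes b_1\otimes\overline{b_2}$ on $T\otimes_AB$ sends $t\otimes 1$ to $t\otimes 1\otimes 1_C$, so $t\otimes 1$ is coinvariant, and $\eta(ta)=ta\otimes 1=t\otimes a=(t\otimes 1)a=\eta(t)a$ shows $\eta$ is $A$-linear. The same coinvariance computation shows $\zeta$ factors through $\otimes_A$ (since $(qa)b=q(ab)$), is obviously $B$-linear, and is $C$-colinear because $\rho_Q(qb)=\sum q_0b_1\otimes q_1\overline{b_2}=\sum qb_1\otimes\overline{b_2}$ for coinvariant $q$.

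Next I would write down the two assignments realizing the bijection. Given a morphism $g\colon T\otimes_AB\to Q$ in $\mathcal{M}^C_B$, set $\bar g\colon T\to Q^{\text{co}(C)}$, $\bar g(t)=g(t\otimes 1)$; this lands in the coinvariants because $t\otimes 1$ is coinvariant and $g$ is colinear, and it is $A$-linear because $g$ is $B$-linear, namely $\bar g(ta)=g(t\otimes a)=g(t\otimes 1)a=\bar g(t)a$. Conversely, given $h\colon T\to Q^{\text{co}(C)}$ in $\mathcal{M}_A$, set $\tilde h\colon T\otimes_AB\to Q$, $\tilde h(t\otimes b)=h(t)b$; the $A$-linearity of $h$ makes this well-defined over $\otimes_A$, it is visibly $B$-linear, and it is $C$-colinear by the same coinvariance computation used for $\zeta$.

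Finally I would verify that $g\mapsto\bar g$ and $h\mapsto\tilde h$ are mutually inverse: $\overline{\tilde h}(t)=\tilde h(t\otimes 1)=h(t)\cdot 1=h(t)$, and $\widetilde{\bar g}(t\otimes b)=\bar g(t)b=g(t\otimes 1)b=g(t\otimes b)$ by $B$-linearity of $g$. Naturality in $T$ and in $Q$ is immediate from these formulas. Unwinding the definitions identifies the unit with $\eta$ (take $Q=T\otimes_AB$ and $g=\id$, so $\bar g(t)=t\otimes 1$) and the counit with $\zeta$ (take $T=Q^{\text{co}(C)}$ and $h=\id$, so $\tilde h(q\otimes b)=qb$). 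I do not expect any genuine obstacle: the argument is bookkeeping with the two coactions, and the only point requiring care is the consistent use of $A=B^{\text{co}(C)}$ being contained in the coinvariants, which is precisely what guarantees both that $Q^{\text{co}(C)}$ is an $A$-module and that $\zeta$ is colinear. This mirrors, in the dual conormal setting, the adjunction of Lemma \ref{lem.adj1}.
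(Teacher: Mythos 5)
Your proof is correct and takes essentially the same approach as the paper's: both realize the bijection through the commutative square built from $t\mapsto t\otimes 1$ and the inclusion $Q^{\text{co}(C)}\hookrightarrow Q$, with the $C$-colinearity of $g$ forcing the image into the coinvariants and vice versa. You simply make explicit the routine verifications (the $A$-module structure on $Q^{\text{co}(C)}$, well-definedness of $\zeta$ over $\otimes_A$, the mutual inversion and the identification of unit and counit) that the paper leaves implicit.
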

\begin{proof} 
Let   $T \in \mathcal{M}_A$ and $Q \in \mathcal{M}^{{C}}_B$. Then $f: T \longrightarrow Q^\text{co$(C)$}$ in $\mathcal{M}_A$ corresponds to $g: T \otimes_AB \longrightarrow Q$ in $\mathcal{M}^B_A$ by means of the following commutative diagram:
\begin{equation}\label{eq3.5}\xymatrix{T\ar[d]_f\ar[r] & T\otimes_AB\ar[d]^g\\
Q^\text{co$(C)$}\ar@{^(->}[r]& Q,}\end{equation}
where the upper horizontal map is given explicitly by $t\mapsto t\otimes 1$.
In fact, the $C$-colinearity of $g$ forces the image of $f$ to be in $Q^\text{co$(C)$}$ and vice-versa.
 \end{proof}

The following proposition generalizes \cite[Theorem 2]{T1}.
\begin{proposition} \label{T_2}
Suppose that  $B$ is left faithfully co-flat over ${C}$.
Then the  above functors establish an equivalence between categories $\mathcal{M}_A$ and $ \mathcal{M}^ {{C}}_B$. Furthermore, $B$ is fully faithful as a left $A$-module.
\end{proposition}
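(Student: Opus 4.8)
The plan is to prove this as the exact mirror of Proposition~\ref{thm_T1}, systematically interchanging tensor with cotensor, modules with comodules, and flatness with co-flatness; the hypothesis that $B$ is faithfully co-flat over $C$ will play the role that faithful flatness of $B$ over $A$ played there. By the preceding lemma we already have the adjunction $(-\otimes_A B)\dashv (-)^{\mathrm{co}(C)}$, with unit $\eta_T\colon T\to (T\otimes_A B)^{\mathrm{co}(C)}$ and counit $\zeta_Q\colon Q^{\mathrm{co}(C)}\otimes_A B\to Q$, so it suffices to show that $\eta$ and $\zeta$ are natural isomorphisms. I record at the outset that $A=B^{\mathrm{co}(C)}$, being a kernel, is saturated in $B$ and hence $R$-flat, and that $B$ is faithfully flat over $R$.

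The key technical ingredient, and the step I expect to be the main obstacle, is the construction of the dual canonical isomorphism. For $Q\in\mathcal M^C_B$ I would define
$$\Theta_Q\colon Q^{\mathrm{co}(C)}\otimes B\longrightarrow Q\square_C B,\qquad q\otimes b\longmapsto \sum q b_1\otimes b_2,$$
and verify, using the coinvariance of $q$ and the compatibility of the $B$-action with the $C$-coaction on $Q$, that the image indeed lies in $Q\square_C B$. This is the mirror of the isomorphism $\gamma_{A,M}\colon M\otimes_A B\xrightarrow{\sim}(M\otimes_A R)\otimes B$ of Proposition~\ref{thm_T1}, and as there the inverse is manufactured from the antipode, by $x\otimes y\mapsto\sum x S(y_1)\otimes y_2$; the fiddly points are checking that this candidate inverse lands in $Q^{\mathrm{co}(C)}\otimes B$ and that the two maps are mutually inverse. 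Specialising to $Q=B$ yields the co-Galois isomorphism $A\otimes B\xrightarrow{\sim} B\square_C B$, the analogue of \eqref{eq_canonical}.

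With $\Theta$ in hand I would run the dual of the argument in Proposition~\ref{thm_T1}. Given $f\colon T\to Q^{\mathrm{co}(C)}$ corresponding to $g\colon T\otimes_A B\to Q$ as in diagram \eqref{eq3.5}, I would cotensor $g$ with $B$ over $C$ and use the co-flatness of $B$ over $C$ together with \eqref{eq2} and $\Theta$ to identify
$$(T\otimes_A B)\square_C B\cong T\otimes_A(B\square_C B)\cong T\otimes_A(A\otimes B)=T\otimes B,\qquad Q\square_C B\cong Q^{\mathrm{co}(C)}\otimes B,$$
under which $g\square_C B$ becomes $f\otimes\mathrm{id}_B$. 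A short diagram chase then shows that $g$ is an isomorphism if and only if $f$ is: one direction uses that $B$ is faithfully co-flat over $C$, so that $-\square_C B$ reflects isomorphisms, the other that $B$ is faithfully flat over $R$. Taking $g=\mathrm{id}_{T\otimes_A B}$ shows that $\eta_T$ is an isomorphism, and taking $f=\mathrm{id}_{Q^{\mathrm{co}(C)}}$ shows that $\zeta_Q$ is an isomorphism, whence $\Phi$ and $\Psi$ are inverse equivalences. Finally, the isomorphism $(T\otimes_A B)\square_C B\cong T\otimes B$ exhibits $-\otimes_A B$ as faithfully exact: since both $-\square_C B$ and $-\otimes_R B$ are faithfully exact and their composite is $-\square_C B$ applied to $-\otimes_A B$, the functor $-\otimes_A B$ reflects exactness and the zero object, i.e.\ $B$ is faithfully flat as a left $A$-module, which is the mirror of the concluding co-flatness assertion of Proposition~\ref{thm_T1}.
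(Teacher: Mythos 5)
Your proposal is correct and follows essentially the same route as the paper: your $\Theta_Q$ is exactly the paper's isomorphism $\theta_{C,Q}:Q^{\text{co}(C)}\otimes B\xrightarrow{\sim}Q\square_C B$ (with the same antipode-twisted inverse), and your identification $(T\otimes_A B)\square_C B\cong T\otimes B$ via \eqref{eq2}, the two-sided iso-reflection argument (faithful co-flatness over $C$ in one direction, faithful flatness of $B$ over $R$ in the other), the specializations $g=\mathrm{id}$ and $f=\mathrm{id}$ for the unit and counit, and the concluding deduction of faithful flatness of $B$ over $A$ from the faithfully exact composite $(-\otimes_AB)\square_CB\cong-\otimes_RB$ all coincide with the paper's proof. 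The only presentational difference is at the ``fiddly point'' you flagged: rather than checking element-wise that the candidate inverse lands in $Q^{\text{co}(C)}\otimes B$, the paper gets this for free from a commutative diagram with exact rows, comparing the equalizer defining $Q^{\text{co}(C)}\otimes B$ (which uses $R$-flatness of $B$) with that defining $Q\square_CB$ via the twist isomorphism $\theta_Q:Q\otimes B\to Q\otimes B$.
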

\begin{proof}
For any $P\in\mathcal M^C_B$ we have an isomorphism
$$\theta_P:P\otimes B\xrightarrow{\sim} P\otimes B,\quad p\otimes b\mapsto \sum_b pb_1\otimes b_2,$$
with the inverse map given by $p\otimes b\mapsto \sum_b pS(b_1)\otimes b_2.$
Since $B$ is $R$-flat we have the following commutative diagram with exact rows:
$$\xymatrix{0\ar[r]&
Q^\text{co$(C)$}\otimes B\ar[d]_{\theta_{C,Q}}
\ar[r]&
Q\otimes B\ar[rr]^{(\rho_Q-Q\otimes u)\otimes B}\ar[d]^{\theta_Q}&&
(Q\otimes C)\otimes B\ar[d]^{\theta_{Q\otimes C}}\\
0\ar[r]&Q\square_CB\ar[r]& 
Q\otimes B\ar[rr]_{\rho_Q-\rho_B} &&
Q\otimes C\otimes B,}$$
where $u:R\to C$ denotes the unit map for $C$ 
and in the definition of 
$\theta_{Q\otimes C}$ the action of $B$ on 
$Q\otimes C$ is diagonal. This forces the first 
vertical map to be an isomorphism.
Thus we have an isomorphism
$$\theta_{C,Q}:Q^\text{co$(C)$}\otimes B\xrightarrow\sim Q\square_CB,\quad q\otimes b\mapsto \sum_b qb_1\otimes b_2,$$
for any $Q$ in $\mathcal M^C_B$. In particular we have
$$\theta_{C,B}:A\otimes B\xrightarrow\sim B\square_CB,\quad a\otimes b\mapsto \sum_bab_1\otimes b_2.$$
The inverse map is given by $b\otimes b'\mapsto \sum_{b'}bS(b'_1)\otimes b'_2.$

Tensoring \eqref{eq3.5} with $B$ and twisting it 
with the map $\theta$ we have a commutative diagram
$$\xymatrix{
T\otimes B\ar[d]_{f\otimes B}\ar[rr]&&
(T\otimes_AB)\otimes B\ar[d]^{g\otimes B}  \\
Q^\text{co$(C)$}\otimes B\ar@{^(->}[rr]&&Q\otimes B,}$$
where the upper and lower horizontal maps are given explicitly by $t\otimes b\mapsto\sum_b (t\otimes b_1)\otimes b_2$ and $q\otimes b\mapsto \sum_b qb_1\otimes b_2$, respectively.
 By means of the equality in \eqref{eq2}, we have
\begin{equation}\label{eq.T}
T\otimes B=T\otimes_A(A\otimes B)
\cong T\otimes_A(B\square_{C}B)=
(T\otimes_AB)\square_{C}B\subset (T\otimes_AB)\otimes B, \end{equation}
with the composed map given by $t\otimes b\mapsto \sum_b t\otimes b_1\otimes b_2$. Thus the above diagram reduces to the following commutative diagram with horizontal morphisms being isomorphisms:
$$\xymatrix{
T\otimes B\ar[d]_{f\otimes B}\ar[rr]&&
(T\otimes_AB)\square_CB
\ar[d]^{g\square_C B}  \\
Q^\text{co$(C)$}\otimes  B\ar[rr]_{\theta_{C,Q}}&&Q\square_C B.}$$
Consequently, $f$ is  an isomorphism iff $g$ is, as $B$ is faithfully co-flat over $C$.
This implies the adjunctions
 $T\longrightarrow
 (T\otimes_AB)^\text{co$ {C}$}$, $t \mapsto t \otimes 1$ ;
and $ Q^\text{co$(C)$}\otimes_AB \longrightarrow Q$, $ q\otimes b \mapsto qb$ are isomorphisms, where $t \in T, q\in Q$ and $b \in B$.
 Thus the functors are equivalences.
 
 Finally, the isomorphism in \eqref{eq.T} shows that the functor $(-\otimes_AB)\square_CB$ is fully faithful. As $B$ is faithfully co-flat as a left $C$-comodule, we conclude that $B$ is faithfully flat as a right $A$-module.
\end{proof}

\begin{remarks}\label{rmk2}
Again, the experts can see that Proposition \ref{T_2} can be formulated in a stronger form, namely it suffices to assume: ``$C$ is an $R$-flat Hopf algebra, $B$ is flat over $R$ and is faithfully co-flat as a left $C$-comodule,  where $R$ is any commutative ring''. Note that under this assumption, $A:=B^\text{co$(C)$}$ is faithfully flat over $R$. Indeed, the isomorphism $\theta_{C,B}:A\otimes B\xrightarrow\sim B\square_CB$ and the isomorphism in Subsection \ref{subsection2.2}~(for $A=R$ and remembering that $B$ is left faithfully co-flat over $C$) yields an isomorphism
$$(-\otimes A)\otimes B\cong-\otimes(B\square_CB)\cong (-\otimes B)\square_CB,$$
which shows that $A$ is faithfully flat over $R$.
\end{remarks}

\begin{proposition}
\label{f-co-flat}
Let $B$ be an $R$-flat Hopf algebra. Let $\mathcal{S}(B)$ be the set of all  normal Hopf subalgebras  of  $A$ such that  $B$ left faithfully flat over $A$ (in particular, $A$ is saturated in $B$) and let
$\mathcal{I}(B)$ be the set of  all  saturated normal Hopf  ideals $I$ such that $B$ is left faithfully co-flat over $B/I$. Then
 $$ \Phi:  \mathcal{S}(B) \longrightarrow  \mathcal{I}(B),\quad
 A\mapsto BA^+$$  
and  
$$ \Psi:  \mathcal{I}(B) \longrightarrow  \mathcal{S}(B),\quad
I\mapsto B^{co(B/I)}$$  
are mutually inverse maps.
\end{proposition}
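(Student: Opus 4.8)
The plan is to verify first that $\Phi$ and $\Psi$ take values in the claimed sets, and then to prove that the two composites are the identity by evaluating the unit and the counit of the equivalences of Propositions~\ref{thm_T1} and~\ref{T_2} at two distinguished objects. For $\Phi$, let $A\in\mathcal S(B)$, so that $B$ is left faithfully flat over $A$. By Lemma~\ref{normal}(ii) the ideal $BA^+=A^+B$ is a normal Hopf ideal, and Proposition~\ref{thm_T1} shows that $C:=B/BA^+$ is $R$-flat (hence $BA^+$ is saturated) and that $B$ is left faithfully co-flat over $C$; thus $BA^+\in\mathcal I(B)$. For $\Psi$, let $I\in\mathcal I(B)$. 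Since $I$ is saturated and normal, $C:=B/I$ is $R$-flat and the quotient $\pi\colon B\to C$ is conormal, so by Lemma~\ref{normal}(i) the submodule $A:=B^{\text{co}(C)}={}^{\text{co}(C)}B$ is a normal Hopf subalgebra of $B$. Proposition~\ref{T_2}, in the left--right mirror form of~\ref{subsection2.3}, then yields that $B$ is left faithfully flat over $A$, so $A\in\mathcal S(B)$.

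To prove $\Psi\Phi=\id$, I would fix $A\in\mathcal S(B)$, set $C:=B/BA^+$, and observe that $A$ itself is an object of $\mathcal M^B_A$: it is a right $A$-module by multiplication and a right $B$-comodule through the restriction $\Delta_A\colon A\to A\otimes A\subset A\otimes B$, the Hopf-module axiom holding because $\Delta_A$ is the restriction of $\Delta_B$. Under the functor $\Phi$ of Proposition~\ref{thm_T1} one has $\Phi(A)=A\otimes_AR\cong R$ (the trivial $C$-comodule), so that $\Psi\Phi(A)=R\,\square_CB={}^{\text{co}(C)}B=B^{\text{co}(C)}$. The key computation is that the adjunction unit $\eta_A\colon A\to(A\otimes_AR)\square_CB$ of Lemma~\ref{lem.adj1}, which is $\eta_A=(q_A\otimes B)\rho_A$, takes the explicit form $\eta_A(a)=\sum q_A(a_1)\otimes a_2=\sum\varepsilon(a_1)a_2=a$; that is, $\eta_A$ is precisely the inclusion $A\hookrightarrow B^{\text{co}(C)}$. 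Since $\Phi$ and $\Psi$ are inverse equivalences, $\eta_A$ is an isomorphism, whence $A=B^{\text{co}(C)}=\Psi\Phi(A)$.

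Dually, for $\Phi\Psi=\id$, I would fix $I\in\mathcal I(B)$, set $C:=B/I$ and $A:=B^{\text{co}(C)}=\Psi(I)$, and use the object $C\in\mathcal M^C_B$, with comodule structure $\Delta_C$ and $B$-action $c\cdot b:=c\,\pi(b)$; its coinvariants are $C^{\text{co}(C)}=R\,1_C$, on which $A$ acts through $\varepsilon_A$. Evaluating the counit $\zeta_C\colon C^{\text{co}(C)}\otimes_AB\to C$ of Proposition~\ref{T_2}, namely $q\otimes b\mapsto q\,\pi(b)$, and identifying $C^{\text{co}(C)}\otimes_AB=R\otimes_AB=B/A^+B$, one sees that $\zeta_C$ is the canonical projection $B/A^+B\to B/I$, well defined because $A^+\subseteq\ker\pi=I$ (apply $\varepsilon_B\otimes\id$ to the coinvariance relation). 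As $\zeta_C$ is an isomorphism induced by $\id_B$, the two kernels coincide, i.e. $A^+B=I$; since $A$ is normal, $BA^+=A^+B=I$ by Lemma~\ref{normal}(ii), so $\Phi\Psi(I)=BA^+=I$.

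The step I expect to require the most care is the bookkeeping of left and right structures: Proposition~\ref{T_2} is formulated for $B$ faithfully co-flat as a \emph{left} $C$-comodule and produces faithful flatness on one side only, so I must appeal to its mirror from~\ref{subsection2.3} to obtain the \emph{left} faithful flatness demanded by $\mathcal S(B)$, and I must invoke Lemma~\ref{normal}(i) to identify the left and right coinvariants ${}^{\text{co}(C)}B=B^{\text{co}(C)}$. The genuinely substantive point, however, is recognising the adjunction unit and counit as the natural inclusion $A\hookrightarrow B^{\text{co}(C)}$ and the natural projection $B/A^+B\to B/I$; once this is done, the fact that $\Phi$ and $\Psi$ are equivalences converts ``isomorphism'' into the desired equalities. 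A more computational alternative would argue directly from the canonical isomorphisms $\gamma_{A,B}\colon B\otimes_AB\cong C\otimes B$ of \eqref{eq_canonical} and $\theta_{C,B}\colon B^{\text{co}(C)}\otimes B\cong B\,\square_CB$, but the unit/counit route is shorter and avoids tracking these identifications by hand.
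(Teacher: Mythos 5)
Your proof is correct, and it rests on exactly the same pillars as the paper's: Lemma~\ref{normal} together with Propositions~\ref{thm_T1} and~\ref{T_2} for the well-definedness of $\Phi$ and $\Psi$, and the two category equivalences for mutual inverseness. The difference lies only in the endgame. The paper specializes the key isomorphisms directly: setting $N=R$ in \eqref{eq.Psi} gives $(R\,\square_C B)\otimes_A B\cong B$, and faithful flatness of $B$ over $A$ upgrades the inclusion $A\subseteq B^{\text{co}(C)}$ to an equality; setting $T=R$ in \eqref{eq.T} gives $B\cong (R\otimes_A B)\,\square_C B$, and the surjection $R\otimes_A B\to C$ together with faithful co-flatness of $B$ over $C$ yields $R\otimes_A B\cong C$, i.e.\ $A^+B=I$. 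You instead evaluate the unit of the adjunction of Lemma~\ref{lem.adj1} at the distinguished Hopf module $A\in\mathcal M^B_A$ and the counit of Proposition~\ref{T_2} at $C\in\mathcal M^C_B$, identifying them with the inclusion $A\hookrightarrow B^{\text{co}(C)}$ and the canonical projection $B/A^+B\to B/I$; since the adjunctions are equivalences, these maps are isomorphisms. This is an equivalent and arguably cleaner packaging: what the paper's specializations plus descent establish is precisely that these unit and counit maps are isomorphisms, so the two arguments are interchangeable — yours avoids redoing descent at the cost of introducing and verifying the auxiliary objects $A\in\mathcal M^B_A$ and $C\in\mathcal M^C_B$ (both of which you check correctly). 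Two bookkeeping remarks. First, your appeal to a ``mirrored'' Proposition~\ref{T_2} is unnecessary, and would in fact be unavailable, since $\mathcal I(B)$ only supplies \emph{left} faithful co-flatness: Proposition~\ref{T_2} as stated already shows that $-\otimes_A B$ on right $A$-modules (hence $B$ as a \emph{left} $A$-module) is exact and faithful, i.e.\ $B$ is left faithfully flat over $A$; the word ``right'' in the final sentence of the paper's proof of~\ref{T_2} is a slip, as the paper's own proof of~\ref{f-co-flat} confirms by citing~\ref{T_2} for left faithful flatness. Second, your citation of Lemma~\ref{normal}(i) (via conormality of $\pi$) for ${}^{\text{co}(C)}B=B^{\text{co}(C)}$ is both necessary — your unit lands in $R\,\square_C B={}^{\text{co}(C)}B$ while $\Psi(I)$ is defined with right coinvariants — and more accurate than the paper, whose proof cites~\ref{normal}(ii) at the analogous spot, evidently a typo for~(i).
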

\begin{proof}  
Let $A\in \mathcal S(B)$. Set $C:=B/BA^+=B\otimes_AR$. 
Then according to Proposition \ref{thm_T1}, $C$ is $R$-flat, the map $B\to C$ is conormal and $B$ is faithfully co-flat as a left $C$-comodule.
Moreover,  setting $N=R$ in \eqref{eq.Psi} and noticing that $A\subset B^\text{co$(C)$}$, we conclude that $A=B^\text{co$(C)$}$.

Conversely, assume that $I\in \mathcal I(B)$ and 
let $B\to C=B/I$ be the corresponding conormal 
quotient map of flat  Hopf algebras. Then  $A:=B^\text{co$(C)$}$ is a saturated normal 
Hopf subalgebra of $B$, by Lemma \ref{normal}~(ii). According to Proposition \ref{T_2}, $B$ is left faithfully flat over $A$. Moreover, setting 
$T=R$ in \eqref{eq.T} and noticing that the map 
$R\otimes_AB\to C$ is surjective as $A^+\subset I$, we conclude 
that $R\otimes_AB\cong C$.
\end{proof}

\begin{remarks}\label{rmk3}
The correspondence in Proposition \ref{f-co-flat} holds for any commutative ring $R$:
{\em There is a one-to-one correspondence between the set of all normal Hopf subalgebras $A$ of $B$ such that $B$ is left faithfully flat as an $A$-module (which implies that $B/A$ is $R$-flat) and the set of all normal Hopf ideals $I$, such that $C:=B/I$ is $R$-flat (i.e. $I$ is saturated in $B$ as an $R$-submodule) and $B$ is left faithfully co-flat over $C$.}

 Indeed, it is a consequence of Proposition \ref{thm_T1}, Remark \ref{rmk1} that the map $\Phi:A\mapsto I=BA^+$ is well-defined. For the inverse map we first set $C:=B/I$ and employ Remark \ref{rmk2} to see that $A:=B^\text{co$(C)$}$ is an algebra, which is faithfully flat over $R$  and $B$ is faithfully flat as a left $A$-module. Consequently $A$ is a saturated $R$-submodule of $B$ (that is $B/A$ is $R$-flat), see proof of \eqref{thm_T1}. Now we can imitate the original proof of Schneider \cite[Lem~1.3(i)]{Sch93} to show that $A$ is a Hopf subalgebra of $B$, which is normal by its construction.
\end{remarks}

\section{The faithful flatness over Hopf subalgebras}\label{Sect4}

In this section,  we want to give a faithful flatness criterion for a map of $R$-algebras by means of the flatness of its fibers at residues fields of $R$. Compared with the well-known flatness criterion of Grothendieck  \cite[11.3.11]{EGA}, our criterion does not need any finiteness conditions. 

Let $R$ be a Dedekind ring with fraction field $K$, a residue field will be denoted by $k$.
  The proof of the following lemma is similar to that of \cite[Thm 4.1.1]{DH}.
\begin{lemma}\label{flat0}
  Let $A$  be an $R$-algebra and $B$ be an $A$-module such that both $A, B$ are flat over $R$. Then  $B$ is flat over $A$
 if   and only if $B_k$ is flat over $A_k$,  for $k$ being  the fraction field  and any residue field of $R$.
\end{lemma}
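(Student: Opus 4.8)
The forward implication is routine: flatness is stable under base change, so if $B$ is flat over $A$ then $B_k\cong B\otimes_A A_k$ is flat over $A_k$ for every quotient field $R\to k$, in particular for $k=K$ and for any residue field. The content is the converse, and the plan is to reduce to a discrete valuation ring and there to control $\mathrm{Tor}^A_\bullet$ through the generic and the special fiber separately.

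\emph{Reduction to a DVR.} I would use that $B$ is flat over $A$ precisely when $\mathrm{Tor}_1^A(N,B)=0$ for every right $A$-module $N$, a condition carrying no finiteness restriction. Since an $R$-module vanishes iff all of its localizations at the maximal ideals of $R$ vanish, and since localization at a prime of $R$ is exact and commutes with $\mathrm{Tor}^A$, it suffices to prove the statement after replacing $R$ by $R_{\mathfrak m}$ for each maximal ideal $\mathfrak m$. Now $R_{\mathfrak m}$ is a DVR with fraction field $K$ and residue field $k(\mathfrak m)=R/\mathfrak m$, while $(B_{\mathfrak m})_K=B_K$ and $(B_{\mathfrak m})_{k(\mathfrak m)}=B_{k(\mathfrak m)}$; so the two hypotheses over $R_{\mathfrak m}$ are exactly the fraction-field and the residue-field conditions of the statement. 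Thus I may assume $R$ is a DVR with uniformizer $\pi$, residue field $k$ and fraction field $K$.

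\emph{The DVR case.} Fix a right $A$-module $N$ and set $T:=\mathrm{Tor}_1^A(N,B)$. Because $K$ is $R$-flat, $-\otimes_R K$ commutes with $\mathrm{Tor}^A$ and turns a resolution computing $T$ into one over $A_K$; hence $T\otimes_R K\cong \mathrm{Tor}_1^{A_K}(N_K,B_K)$, which vanishes since $B_K$ is $A_K$-flat. Over a DVR this says $T$ is a torsion $R$-module, and a torsion module on which $\pi$ acts injectively is zero; so it is enough to show $T[\pi]=0$. Applying $\mathrm{Tor}^A(N,-)$ to the short exact sequence $0\to B\xrightarrow{\ \pi\ }B\to B_k\to0$, which is exact because $B$ is $R$-torsion-free, identifies $T[\pi]=\ker(\pi\colon T\to T)$ with a quotient of $\mathrm{Tor}_2^A(N,B_k)$. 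The whole lemma thus reduces to the vanishing $\mathrm{Tor}_2^A(N,B_k)=0$, i.e. to the claim that $B_k$ has flat dimension at most one over $A$.

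\emph{The key vanishing and the main obstacle.} This is where the residue-field hypothesis enters. The algebra $A_k=A/\pi A$ has flat dimension at most one over $A$, the resolution being $0\to A\xrightarrow{\ \pi\ }A\to A_k\to0$, which is legitimate because $A$ is $R$-torsion-free and so $\pi$ is a non-zero-divisor on $A$; moreover $B_k$ is flat over $A_k$ by hypothesis. Feeding these into the change-of-rings spectral sequence $\mathrm{Tor}_p^{A_k}\big(\mathrm{Tor}_q^A(N,A_k),B_k\big)\Rightarrow \mathrm{Tor}_{p+q}^A(N,B_k)$, the $A_k$-flatness of $B_k$ kills every row $p\ge1$, while the length-one resolution gives $\mathrm{Tor}_q^A(N,A_k)=0$ for $q\ge2$; the three terms of total degree two all vanish, whence $\mathrm{Tor}_2^A(N,B_k)=0$. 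Therefore $T[\pi]=0$, so $T=0$ for every $N$, and $B$ is flat over $A$. The delicate point, and the reason the classical fibral criterion must be reworked, is exactly the control of $T[\pi]$ for an \emph{arbitrary} (possibly non-finitely-generated) $N$: it is precisely the $R$-torsion-freeness of $A$ and $B$ — encoded in the length-one resolution of $A_k$ and the injectivity of $\pi$ on $B$ — that lets the spectral sequence replace the completions and Artin--Rees arguments of the usual local flatness criterion, so that no Noetherian or finiteness hypothesis is needed.
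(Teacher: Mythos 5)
Your proof is correct, and it takes a genuinely different route from the paper's. The paper argues globally over the Dedekind ring: it fixes an $A$-projective resolution $P_*\to B$, observes (using the $R$-flatness of $A$ and the fact that over a Dedekind ring a submodule of a flat module is flat) that $P_*\otimes k$ remains a projective resolution of $B_k$ over $A_k$, and applies the universal coefficient theorem to obtain, for $R$-flat $M$, short exact sequences
$$0\to {\rm Tor}^A_i(M,B)\otimes k\to {\rm Tor}^{A_k}_i(M_k,B_k)\to {\rm Tor}^R_1\bigl({\rm Tor}^A_{i-1}(M,B),k\bigr)\to 0,$$
concluding that ${\rm Tor}^A_i(M,B)$ is $R$-flat and generically zero, hence zero; an arbitrary module is then handled by a d\'evissage (pass to the torsion submodule, write a torsion module as a direct limit of modules $M_p$, filter so as to reduce to $M$ killed by a single prime $p$, where ${\rm Tor}^A_i(M,B)={\rm Tor}^{A\otimes k_p}_i(M,B\otimes k_p)=0$ directly). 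You instead localize at the maximal ideals of $R$ to reduce to a DVR --- legitimate, since central localization is exact, preserves the hypotheses, and commutes with ${\rm Tor}^A$ --- and then treat all right $A$-modules $N$ at once: the generic-fibre hypothesis makes $T={\rm Tor}_1^A(N,B)$ torsion, while the special-fibre hypothesis, fed through the length-one resolution $0\to A\xrightarrow{\pi}A\to A_k\to 0$ and the base-change spectral sequence, gives ${\rm Tor}_2^A(N,B_k)=0$, hence $T[\pi]=0$ and $T=0$. Both arguments are finiteness-free, as the statement demands; yours avoids the paper's two-step case analysis ($R$-flat $M$ versus torsion $M$) and its limit/filtration d\'evissage, at the price of invoking the change-of-rings spectral sequence where the paper only needs the universal coefficient theorem (if you prefer to dispense with the spectral sequence, note that by Lazard's theorem $B_k$ is a filtered colimit of finite free $A_k$-modules, that ${\rm Tor}_2^A(N,A_k)=0$ by the length-one resolution, and that ${\rm Tor}$ commutes with filtered colimits, which yields ${\rm Tor}_2^A(N,B_k)=0$ directly). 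One point worth making explicit, since the paper's $A$ is a Hopf algebra and need not be commutative: your terms ${\rm Tor}_p^{A_k}\bigl({\rm Tor}_q^A(N,A_k),B_k\bigr)$ make sense because $\pi\in R$ is central, so $A_k$ is an $(A,A_k)$-bimodule, the groups ${\rm Tor}_q^A(N,A_k)$ are right $A_k$-modules, and multiplication by $\pi$ is a bimodule map; with that remark, every step of your argument goes through in the noncommutative setting.
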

\begin{proof}
 We prove the ``only if'' claim. Assume that $B$ is a left $A$-module and that $B_k$ are flat over $A_k$ for all $k$ as above. To show that $B$ 
is flat over $A$, it suffices to check that ${\rm Tor}^{A}_1(M,B)=0$ for any  right $A$-module 
$M$ (see \cite[Exer.~3.2.1, p.69]{Weibel}).
Choose  a left projective resolution $P_{*}$ of 
$B$ over $A$, \cite[Lemma 2..2.5]{Weibel}. Since $A$ is $R$-flat, so are the 
terms of $P^*$ and since a submodule of an 
$R$-flat module is again flat, the resolution 
$P_*\to B$ remains a resolution after base 
change. On the other hand, the projectivity is 
preserved by base change, therefore, for any 
residue field $k:=k_p$,   $p\in \spec(R)$,
$P_{*} \otimes k$ is a left projective resolution 
of $B \otimes k$ over $A \otimes k$. We have
$$ (M \otimes k)  \otimes_{(A \otimes k)}(P_{*} \otimes k) \cong (M \otimes_{A} P_{*}) \otimes k, $$ implying
$$H_i ((M \otimes_{A} P_{*}) \otimes k) \cong {\rm Tor}^{A \otimes k}_i (M \otimes k, B \otimes k),\mbox{ for all } i \ge 0.$$
First assume that $M$ is $R$-flat. Since $M \otimes_{A} P_{*}$ is $R$-flat, we can apply the universal  coefficient theorem, (see, e.g., \cite[Thm~3.6.1]{Weibel}). Thus, for each $i\geq 1$, we have an exact sequence
$$ 0\to H_i (M \otimes_{A} P_{*}) \otimes k \to H_i ((M \otimes_{A} P_{*}) \otimes k) \to {\rm Tor}^{R}_1({ H_{i-1}}(M \otimes_{A} P_{*}), k)\to 0.$$
That is, for all $i\geq 1$,
$$0\to {\rm Tor}^{A}_i (M, B) \otimes k \to {\rm Tor}^{A\otimes k}_i (M \otimes k, B \otimes k) \to {\rm Tor}^{R}_1({\rm Tor}^{A}_{i-1} (M, B) , k)\to 0.$$
By assumption
$A\otimes k \longrightarrow B\otimes k$ is  flat.  So
${\rm Tor}^{A \otimes k}_i (M \otimes k, B \otimes k) = 0$, for all $i\geq 1$. Consequently
$${\rm Tor}^{R}_1({\rm Tor}^{A}_{i-1} (M, B) , k) = {\rm Tor}^{A}_i (M, B) \otimes k  =0, \mbox{ for all } i\geq 1.$$
This holds for any residue field and the fraction field of $R$, hence
${\rm Tor}^{A}_i (M, B)$ is  flat over $R$ for $i\geq 0$  and we conclude that
${\rm Tor}^{A}_i (M, B) =0$ for all $i\geq 1$.

Let now $M$ be an arbitrary right $A$-module. Note that  $R$-torsion submodule $M_\tau$ of $M$ is also a  right $A$-submodule. Then the quotient module $M/M_\tau$ is $R$-flat and from the exact sequence
$${\rm Tor}^{A}_1(M_\tau,B)\to {\rm Tor}^{A}_1(M,B)\to {\rm Tor}^{A}_1(M/M_\tau,B)\to\ldots$$
it suffices to show ${\rm Tor}^{A}_1(M,B)=0$ for $M$ being $R$-torsion.

For each non-zero ideal $p\subset R$, the submodule 
$M_p$ of elements annihilated by elements of 
$p$, is also an $A$-submodule. As $M$ is 
torsion, it is the direct limit of $M_p$. Since the 
Tor-functor commutes with direct limits, one can 
replace $M$ by some $M_p$. Since  each 
non-zero ideal $p \subset R$ is a product of finitely 
many prime ideals, each $M_p$ has a filtration, 
each grade module of which is annihilated by a 
certain non-zero prime ideal. Thus using 
induction we can reduce to the case $M$ is 
annihilated by a prime ideal $p$. In this case 
$M=M\otimes k_p$ is an 
$A\otimes k_p$-module, where $k_p:=R/p$ and we have
$$M\otimes_{A}P_*= M\otimes_{A\otimes k_p}(P_*\otimes k_p).$$
 Since $P_*\otimes_Rk_p$ is an $A\otimes_Rk_p$-projective resolution of $B\otimes_Rk_p$, we see that
 $${\rm Tor}^{A}_i(M,B)={\rm Tor}^{A\otimes k_p}_i(M,B\otimes k_p)=0,$$
 as $B\otimes k_p$ is flat over $A\otimes k_p$.
\end{proof}

The next theorem is a generalization of the  faithful flatness theorem for flat commutative algebras over a Dedekind ring \cite{DH}\footnote{We thank the anonymous referee for his or her help that allows us to remove the finiteness assumption on $A$ in a previous version}.
\begin{proposition}\label{flatness}
  Let $A$  be an $R$-algebra and $B$ be an $A$-module such that both $A, B$ are flat over $R$.  Then   $B$ is faithfully flat over $A$
 if and only if $B_k$ is faithfully flat over $A_k$  for $k$ being either the fraction field or any residue field of $R$.
\end{proposition}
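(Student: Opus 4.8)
My plan is to decouple flatness from faithfulness. By Lemma \ref{flat0} the flatness of $B$ over $A$ is already equivalent to the flatness of every $B_k$ over $A_k$, so in both directions I would grant these flatness statements and argue only about faithfulness. The two tools I would invoke are, first, the standard criterion that a \emph{flat} $A$-module $B$ is faithfully flat if and only if $N\otimes_A B\neq 0$ for every nonzero $A$-module $N$, equivalently if and only if $\mathfrak m B\neq B$ (that is, $B\otimes_A A/\mathfrak m\neq 0$) for every maximal ideal $\mathfrak m$ of $A$ (Bourbaki, \emph{Commutative Algebra}, Chapter~I); and, second, the elementary identity valid for any $A_k$-module $N$,
\[
N\otimes_{A_k}B_k\;\cong\;N\otimes_A B,
\]
which follows from $B_k=B\otimes_R k=A_k\otimes_A B$ together with associativity of the tensor product.

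For the implication $\Rightarrow$, assume $B$ is faithfully flat over $A$ and let $k$ be the fraction field or a residue field of $R$. Given any nonzero $A_k$-module $N$, I would view it as an $A$-module and apply the identity above to get $N\otimes_{A_k}B_k\cong N\otimes_A B\neq 0$, the nonvanishing coming from the faithful flatness of $B$ over $A$. Since $B_k$ is flat over $A_k$ by Lemma \ref{flat0}, this shows $B_k$ is faithfully flat over $A_k$.

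For the implication $\Leftarrow$, I would verify the maximal-ideal criterion directly. Let $\mathfrak m$ be a maximal ideal of $A$ and set $\mathfrak p:=\ker\bigl(R\to A\to A/\mathfrak m\bigr)$, a prime ideal of $R$. As $R$ is a Dedekind ring, $\mathfrak p$ is either $0$ or maximal; put $k:=K$ in the first case and $k:=R/\mathfrak p$ in the second. In either case the nonzero elements of $R\setminus\mathfrak p$ map to units of the field $A/\mathfrak m$, so the $A$-module structure on $A/\mathfrak m$ extends to an $A_k$-module structure, and $A/\mathfrak m\cong A_k/\mathfrak m_k$ for the maximal ideal $\mathfrak m_k$ of $A_k$ induced by $\mathfrak m$. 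Hence
\[
B\otimes_A(A/\mathfrak m)\;\cong\;B_k\otimes_{A_k}(A_k/\mathfrak m_k),
\]
which is nonzero because $B_k$ is faithfully flat over $A_k$ and $\mathfrak m_k$ is maximal. Thus $\mathfrak m B\neq B$ for every maximal ideal $\mathfrak m$ of $A$, and $B$ is faithfully flat over $A$.

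The point that needs care, and the genuine obstacle, lies entirely in the direction $\Leftarrow$: one cannot conclude that an $A$-module $M$ with $M\otimes_A B=0$ vanishes merely from the vanishing of all its fibers $M\otimes_R k$, because divisible torsion modules such as $\mathbb Q/\mathbb Z$ over $\mathbb Z$ are killed by every residue field and by the fraction field yet are nonzero. This is exactly why faithfulness must be reduced to the maximal ideals of $A$ (equivalently to the residue modules $A/\mathfrak m$, which are fields), and the real work is the bookkeeping that matches each maximal ideal of $A$ either with the generic fiber $A_K$ or with a closed fiber $A_k$ and identifies the two residue fields.
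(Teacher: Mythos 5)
Your forward direction coincides with the paper's. The converse is where your route genuinely diverges from the paper, and it is also where it breaks at the stated level of generality: you reduce faithfulness to the criterion $\mathfrak m B\neq B$ for maximal ideals $\mathfrak m$ of $A$ and then treat $A/\mathfrak m$ as a \emph{field}, so that the nonzero elements of $R\setminus\mathfrak p$ become units and $A/\mathfrak m$ descends to a fiber $A_k$. This silently assumes $A$ commutative. The proposition, however, concerns an arbitrary $R$-algebra and is applied in the paper to noncommutative Hopf algebras (Proposition \ref{flatness1}, Theorem \ref{flatness2}). For noncommutative $A$ the faithfulness criterion must be run with maximal \emph{one-sided} ideals, equivalently with simple modules; then $A/\mathfrak m$ is only a simple module, not a ring, the phrase ``units of the field $A/\mathfrak m$'' has no meaning, and even the primality of $\mathfrak p=\ker(R\to A\to A/\mathfrak m)$ is no longer automatic.

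The gap is repairable along your own lines: for a simple right $A$-module $S$, Schur's lemma makes ${\rm End}_A(S)$ a division ring, the central action of $R$ gives an embedding $R/{\rm Ann}_R(S)\hookrightarrow {\rm End}_A(S)$, so $\mathfrak p:={\rm Ann}_R(S)$ is prime; if $\mathfrak p=0$ every nonzero $r\in R$ acts invertibly on $S$, making $S$ an $A_K$-module, while if $\mathfrak p\neq 0$ it is maximal and $S$ is an $A_k$-module for $k=R/\mathfrak p$; in either case $S\otimes_AB\cong S\otimes_{A_k}B_k\neq 0$ by the fiberwise hypothesis, and since every nonzero right $A$-module contains a cyclic submodule admitting a simple quotient, the flatness of $B$ over $A$ (Lemma \ref{flat0}) then yields faithfulness. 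With this fix your argument is a legitimate alternative to the paper's, which handles the same difficulty differently: it shows that any $M$ with $M\otimes_AB=0$ has all fibers $M_k=0$, hence is $R$-torsion --- exactly the $\mathbb Q/\mathbb Z$ phenomenon you flag --- and then applies the hypothesis not to simple $A$-modules but to the submodule $M'$ of elements killed by a maximal ideal $\mathfrak m\subset R$, using flatness of $B$ to get $M'\otimes_AB=0$ and $M'\cong M'_k$ to force $M'=0$. The paper's reduction thus works with maximal ideals of the \emph{base} $R$ rather than of $A$ and never needs any residue of $A$ to be a field; as written, your proof only establishes the commutative case.
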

\begin{proof}
Assume that  $B$ is faithfully flat over $A$ as a left module.
Let $k$ be either the fraction field or  a residue field of $R$. Then for any  right $A_k$-module $M$ which satisfies
 $M \otimes_{A_k} B_k = 0$, we have $M \otimes_{A_k} B_k \cong M \otimes_A B =0$, hence $M=0$. Thus  $B_k$ is faithfully flat over $A_k$.

Conversely, assume $B_k$ is left faithfully flat over $A_k$. By the previous lemma, we know that $B$ is left flat over $A$. We show the faithfulness. Let $M$ be a right $A$-module  such that $M\otimes _{A}B=0$. Then we have
$$M_k\otimes _{A_k}B_k\cong (M\otimes _{A}B)\otimes _Rk=0,$$
 for  any residue field $k$ of $R$. 
Since  $B_k$ is (left) faithfully flat over $A_k$, we have $M_k=0$. The argument  is also valid for the fraction field $K$. This mean $M$ is $R$-torsion. 
If $M$ in non-zero then  $M$ contains non-zero elements annihilated by some maximal ideal $m$. Denote by  $M'$  the annihilator of   $m$ in $M$. Clearly, $M' \cong M'_k$  where $k=R/m$.  Since $B$ is flat over $A$,  the equality $M\otimes_AB=0$ implies that  $M'\otimes_{A}B= M'_k \otimes_AB=0$.  But then $M'_k=0$, as has been proved already. Therefore $M'=0$, a contradiction.
Thus $B$ is faithfully flat over $A$.
\end{proof}

As a corollary of Proposition \ref{flatness} and Nichols-Zoeller's theorem \cite{NZ89b}
we have  following
\begin{proposition}\label{flatness1}
Let $B$ be an $R$-finite flat Hopf  algebra and $A$ be a saturated Hopf  subalgebra of $B$.
Then $B$ is (left and right) faithfully flat over $A$.
 \end{proposition}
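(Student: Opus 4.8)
The plan is to reduce the global statement to a fiberwise one by means of Proposition~\ref{flatness}, and then to invoke the Nichols--Zoeller theorem \cite{NZ89b} on each fiber. First I would verify the hypotheses of Proposition~\ref{flatness}: $B$ is $R$-flat by assumption, and $A$, being an $R$-submodule of the torsion-free module $B$, is itself torsion-free, hence flat over the Dedekind ring $R$. Thus both $A$ and $B$ are $R$-flat, so the criterion of Proposition~\ref{flatness} applies and it suffices to check that $B_k$ is faithfully flat over $A_k$ for $k$ the fraction field $K$ and for each residue field $k=R/\mathfrak m$.

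The crucial step is to realize $A_k$ as a Hopf subalgebra of $B_k$ for each such $k$; the content lies entirely in the injectivity of the base-changed map $A_k\to B_k$. For $k=K$ this is immediate, since localization is exact and $A\hookrightarrow B$ is injective. For a residue field this is exactly where the saturation hypothesis enters: because $A$ is saturated in $B$, the quotient $B/A$ is torsion-free and therefore $R$-flat, so ${\sf Tor}_1^R(B/A,k)=0$ and the short exact sequence $0\to A\to B\to B/A\to 0$ remains exact after $-\otimes_Rk$, yielding the injection $A_k\hookrightarrow B_k$. Since the Hopf algebra structures of $A$ and $B$ are compatible with base change, $A_k$ is a Hopf subalgebra of the finite-dimensional $k$-Hopf algebra $B_k$, the finiteness being inherited from the $R$-finiteness of $B$.

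Having placed ourselves in the finite-dimensional setting over a field, I would apply the Nichols--Zoeller theorem \cite{NZ89b}, which gives that $B_k$ is free---in particular left and right faithfully flat---as a module over $A_k$. Feeding this back into Proposition~\ref{flatness} (and into its mirror statement for right modules) gives that $B$ is left and right faithfully flat over $A$, as claimed.

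The only genuine obstacle beyond routine verification is guaranteeing the injectivity of the residue-field fibers $A_k\to B_k$, for without it $A_k$ need not even be a subalgebra of $B_k$ and Nichols--Zoeller would not apply. This is precisely the role played by the saturation hypothesis, via the $R$-flatness of $B/A$; all remaining steps are formal bookkeeping once the fibers are correctly identified.
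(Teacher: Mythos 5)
Your proof is correct and takes essentially the same route as the paper: the saturation hypothesis gives the $R$-flatness of $B/A$ and hence the injectivity of $A_k\to B_k$ on each fiber, Nichols--Zoeller \cite{NZ89b} gives freeness of $B_k$ over $A_k$, and Proposition \ref{flatness} lifts faithful flatness back to the global base. The paper's proof is exactly this argument stated in three lines; you have merely made explicit the ${\sf Tor}_1^R(B/A,k)=0$ step that justifies the fiber inclusions.
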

\begin{proof}
Since $B/A$  is $R$-flat we have $ A_k \subset B_k$ for any residue field and the fraction field $K$.   $B_k$ is free hence, in particular, faithfully flat over $A_k$, therefore Proposition \ref{flatness} ensures that $B$ is faithfully flat over $A$.
\end{proof}

It is known that every  finitely  presented  flat $A$-module  is  projective over $A$, (see \cite[Thm 3.2.7]{Weibel}). Hence we obtain immediately
\begin{corollary}\label{proj}
  Assume that $B$ is finite over $R$. Then  for any saturated Hopf subalgebra $A$, $B$ is (left  and right) projective as an $A$-module.
\end{corollary}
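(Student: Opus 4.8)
The plan is to read this off from Proposition~\ref{flatness1} together with the homological fact, recalled just above, that a finitely presented flat module over any ring is projective. The flatness is already in hand: Proposition~\ref{flatness1} gives that $B$ is left and right faithfully flat over the saturated Hopf subalgebra $A$, hence in particular flat on both sides. So the entire problem reduces to checking that $B$ is finitely presented as an $A$-module on each side.

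First I would note that $B$ is finitely generated over $A$. This is immediate: $B$ is finitely generated as an $R$-module and the structure maps factor as $R\to A\to B$, so any finite $R$-generating set of $B$ serves as a finite $A$-generating set. For the finite presentation I would show that $A$ is Noetherian. Indeed, $A$ is an $R$-submodule of the $R$-finite module $B$, and since $R$ is Dedekind, hence Noetherian, $B$ is a Noetherian $R$-module; therefore its submodule $A$ is finitely generated over $R$, that is, $A$ is a module-finite $R$-algebra. Such an algebra is a Noetherian $R$-module, so every ideal of $A$, being an $R$-submodule, is finitely generated, i.e. $A$ is a Noetherian ring. Over a Noetherian ring every finitely generated module is finitely presented, whence $B$ is finitely presented as an $A$-module.

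Assembling these, $B$ is flat and finitely presented over $A$, so by \cite[Thm~3.2.7]{Weibel} it is projective over $A$; running the same argument on both sides, using the left and right faithful flatness supplied by Proposition~\ref{flatness1}, yields both the left and the right projectivity. The only point requiring any care is the finite presentation, namely the Noetherianity of $A$; once that is secured the conclusion is immediate, and there is no genuinely hard step.
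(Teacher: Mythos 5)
Your proposal is correct and follows essentially the same route as the paper, which likewise deduces the corollary from the faithful flatness in Proposition~\ref{flatness1} together with the fact that a finitely presented flat module is projective (\cite[Thm 3.2.7]{Weibel}). The only difference is that you explicitly justify the finite presentation of $B$ over $A$ (via the Noetherianity of $A$, itself an $R$-finite algebra), a step the paper leaves implicit; your justification is sound.
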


\begin{theorem}\label{flatness2}
Let $B$ be  a flat Hopf  algebra over Dedekind ring $R$ and $A$ be a saturated normal Hopf subalgebra of $B$. Assume that $A$ is $R$-finite.
Then $B$ is (left and right) faithfully flat as an $A$-module. Consequently $B$ is (left and right) faithfully co-flat over $C:=B/A^+B$ and we have equivalences 
$\mathcal{M}^{C}\cong \mathcal{M}^B_A$ and
$\mathcal{M}_A \cong \mathcal{M}^ {C}_B$.
 \end{theorem}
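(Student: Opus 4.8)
The plan is to reduce the faithful flatness of $B$ over $A$ to a fibrewise statement over fields and then invoke Schneider's freeness theorem at each fibre. By Proposition \ref{flatness} it suffices to prove that $B_k$ is faithfully flat over $A_k$ when $k$ ranges over the fraction field $K$ and the residue fields $R/\mathfrak m$ of $R$; in this way the essential content of the argument takes place over a field, where the theory of Hopf algebras is classical.

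First I would record that at each such $k$ the base-changed map $A_k\to B_k$ is an injection of finite-dimensional Hopf algebras. Since $A$ is saturated in $B$, the quotient $B/A$ is $R$-torsion-free, hence $R$-flat, so $\mathrm{Tor}_1^R(B/A,k)=0$ and the sequence $0\to A_k\to B_k\to (B/A)_k\to 0$ remains exact; thus $A_k$ is a Hopf subalgebra of $B_k$. Moreover $A$, being a finite torsion-free submodule of the flat module $B$, is $R$-projective of finite rank, so $A_k$ is finite-dimensional over $k$. Finally $A_k$ is normal in $B_k$: the defining inclusions $\sum b_1 f(a)S(b_2)\in f(A)$ and $\sum S(b_1)f(a)b_2\in f(A)$ of Definition \ref{defn2} assert that certain $R$-linear maps factor through the submodule $f(A)$, and this factorization is preserved by the base change $-\otimes_R k$, since $f(A)_k=A_k$ sits inside $B_k$ by the injectivity just established.

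Now $A_k$ is a finite-dimensional normal Hopf subalgebra of the (possibly infinite-dimensional) Hopf algebra $B_k$ over the field $k$, so Schneider's theorem \cite{Sch93} applies and shows that $B_k$ is free, in particular faithfully flat, as a left and as a right $A_k$-module. Running this over every residue field and over the fraction field, and appealing to Proposition \ref{flatness} together with its left-right mirror, yields that $B$ is both left and right faithfully flat over $A$. I expect this fibrewise verification, and in particular the descent of normality to the fibres, to be the only genuinely delicate point; the rest is bookkeeping. It is precisely here that normality (rather than mere $R$-finiteness of $A$) is indispensable, since for infinite-dimensional $B_k$ the naive freeness statement of Kaplansky's conjecture fails, and only the normal case is rescued by Schneider's result.

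For the final assertions I would feed the established faithful flatness into the machinery of Section \ref{co-flat}. Left faithful flatness of $B$ over $A$ lets Proposition \ref{thm_T1} conclude that $C:=B/BA^+=B/A^+B$ is $R$-flat, that $B$ is left faithfully co-flat over $C$, and that the functors $\Phi,\Psi$ give the equivalence $\mathcal M^{C}\cong\mathcal M^B_A$. Feeding the left faithful co-flatness of $B$ over $C$ into Proposition \ref{T_2} then yields the equivalence $\mathcal M_A\cong\mathcal M^{C}_B$ and recovers the right faithful flatness of $B$ over $A$, completing the proof.
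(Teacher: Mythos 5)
Your proposal is correct and follows essentially the same route as the paper: the paper likewise observes that saturation (flatness of $B/A$ over $R$) makes $A_k$ a finite normal Hopf subalgebra of $B_k$ at every residue field and at the fraction field, invokes Schneider's freeness theorem \cite[Lemma 2.2 and Theorem 2.4]{Sch93} on each fibre, and concludes via Proposition \ref{flatness}, with the remaining assertions coming from Propositions \ref{thm_T1} and \ref{T_2} exactly as you describe. Your write-up merely makes explicit some fibrewise verifications (injectivity of $A_k\to B_k$, finite-dimensionality of $A_k$, descent of normality) that the paper leaves implicit.
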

\begin{proof}
As $B/A$ flat over $R$, we have  $A_k$ is a normal  finite Hopf  subalgebra of  $B_k$ for $k$ being any residue field or the fraction field.  According to{\cite[Lemma 2.2 and  Theorem 2.4]{Sch93}},   $B_k$ is free over $A_k$.
The assertion follows by Proposition \ref{flatness}.
\end{proof}
\begin{question}
 Let $R$ be a DVR. Suppose that $B$ is $R$-projective and  $A$ is finite  normal  Hopf  subalgebra of $B$. Under which circumstance will $B$ be projective or free over $A$.
\end{question}
If $R$ is local and $B$ is $R$-finite free, then it is known that $B$ will be free over any $R$-projective Hopf subalgebra, see \cite[Remark 2.1]{Sch92}.

\section{Module of integrals on a Hopf algebra}
\label{SecInt}
We study in this section the module of integrals 
on an $R$-projective Hopf algebra, where $R$ is a Dedekind ring. We show that 
this module is either 0 or is projective of rank 1 
over $R$. We also prove some 
finiteness properties of Hopf algebras with non-zero integrals.
 \subsection{The module of integrals}\label{integral} Let $R$ be a  Dedekind ring. Let $H$ be an $R$-flat Hopf algebra. A linear map $\varphi:H\to R$ is called a left integral if it satisfies
$$\sum_hh_1\varphi(h_2)=\varphi(h),\quad \text{ for all } h\in H.$$
Similarly we have the notion of right integrals. Notice that $\varphi$ is a left (resp. right) integral on $H$ iff its is a homomorphism of $H$-comodules, where $H$ coacts on itself on the left (resp. the right). The left (resp. right) integrals on $H$ form an $R$-module, called module of left (resp. right) integrals, denoted by $I^l_H$ (resp. $I^r_H$).

\begin{lemma}\label{FiniteProj} For any $M \in \mathcal{M}^{H}$ which is $R$-finite projective,  ${\sf Hom}^H( H, M) \cong   M \otimes   {\sf Hom}^H( H, R)$.  
\end{lemma}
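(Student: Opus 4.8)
The plan is to build the isomorphism as a composite of natural identifications, using in an essential way that $M$, being $R$-finite projective, is dualizable in the monoidal category $\mathcal M^H$ of right $H$-comodules, together with an untwisting isomorphism of the type $\gamma$ already exploited in \eqref{eq_gamma}. First I would record that the two sides are instances of the same functor evaluated at $M$: here $H$ is a right comodule over itself via $\Delta$ and $R$ carries the trivial coaction $r\mapsto r\otimes 1$, so that $\varphi\colon H\to R$ is colinear precisely when $\sum\varphi(h_1)h_2=\varphi(h)1$; thus ${\sf Hom}^H(H,R)=I^r_H$, the module of right integrals. The naive candidate $m\otimes\varphi\mapsto(h\mapsto\varphi(h)m)$ is colinear only when $m$ is a coinvariant (one computes $\rho_M(\varphi(h)m)-\sum\varphi(h_1)m\otimes h_2=\varphi(h)(\rho_M(m)-m\otimes 1)$ using the integral property), which already shows that the comodule structure of $M$, not merely its underlying module, must intervene.

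The easy half is to pin down the target. Write $M^{\mathrm{triv}}$ for the $R$-module $M$ equipped with the trivial coaction $m\mapsto m\otimes 1$; I claim $M\otimes I^r_H\cong{\sf Hom}^H(H,M^{\mathrm{triv}})$. Indeed $I^r_H=\ker\delta_R$, where $\delta_R\colon {\sf Hom}_R(H,R)\to{\sf Hom}_R(H,H)$ sends $\varphi$ to $h\mapsto\sum\varphi(h_1)h_2-\varphi(h)1$; since the coaction on $M^{\mathrm{triv}}$ is trivial, the factor $M$ pulls out of the ambient Hom-spaces, giving ${\sf Hom}^H(H,M^{\mathrm{triv}})=\ker(\mathrm{id}_M\otimes\delta_R)$, and as $M$ is $R$-flat (finite projective) this kernel equals $M\otimes\ker\delta_R=M\otimes I^r_H$.

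The substantive half, and the main obstacle, is the comodule isomorphism ${\sf Hom}^H(H,M)\cong{\sf Hom}^H(H,M^{\mathrm{triv}})$. This cannot be induced by an isomorphism $M\cong M^{\mathrm{triv}}$, since there is none in general; it must instead exploit that the source is the regular comodule $H$. The route I would take is to use dualizability: $M$ being $R$-finite projective, its $R$-dual $M^\vee=M^*$ becomes a right $H$-comodule via the antipode $S$, and evaluation and coevaluation are colinear, so the tensor--hom adjunction in $\mathcal M^H$ yields ${\sf Hom}^H(H,M)={\sf Hom}^H(H,R\otimes M)\cong{\sf Hom}^H(H\otimes M^\vee,R)$, where $H\otimes M^\vee$ carries the diagonal coaction. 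I would then untwist this diagonal coaction by the analogue, for the first tensor factor, of the isomorphism $\gamma$ of \eqref{eq_gamma}, concentrating the coaction on the regular factor $H$ and thereby identifying $H\otimes M^\vee$ with $M^\vee$ (trivial coaction) tensored with the regular comodule $H$. Finally, for a finite projective $V$ with trivial coaction one has ${\sf Hom}^H(V\otimes H,R)\cong{\sf Hom}_R(V,I^r_H)$ (each $v$ yields a right integral $\phi(v\otimes-)$), and with $V=M^\vee$ and $(M^\vee)^*\cong M$ this is $M\otimes I^r_H$. Composing gives the asserted isomorphism.

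I expect the hard part to be the two colinearity verifications underlying this chain: that the evaluation/coevaluation exhibiting $M$ as dualizable are genuine morphisms in $\mathcal M^H$ (this is where the antipode and finite projectivity are both used), and that the first-factor analogue of $\gamma$ is a comodule isomorphism carrying the diagonal coaction to the one concentrated on $H$. Both are routine Sweedler computations parallel to the verification of $\gamma_M$ in the proof of Proposition \ref{thm_T1}, but they are where the Hopf-algebra structure genuinely enters, the $R$-module manipulations in the other steps being formal consequences of the finite projectivity of $M$.
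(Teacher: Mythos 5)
Your overall architecture is exactly the paper's: its proof is the chain $M\otimes{\sf Hom}^H(H,R)\cong{\sf Hom}^H(M^*\otimes H,R)$ (trivial coaction on $M^*$) $\cong{\sf Hom}^H(M^*\otimes H,R)$ (diagonal coaction, via the untwisting $n\otimes h\mapsto\sum n_0\otimes n_1h$ of the type $\gamma$ in the proof of Proposition \ref{thm_T1}) $\cong{\sf Hom}^H(H,M)$ (dualizability, with $M^*$ coacted through $S$), ending in the explicit map $m\otimes\varphi\mapsto\bigl(h\mapsto\sum m_0\varphi(S(m_1)h)\bigr)$. However, the specific handedness you committed to makes two of your steps fail in precisely the generality the lemma must cover. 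With the $S$-twisted coaction on $M^\vee=M^*$, determined by $\sum\eta_0(m)\eta_1=\sum\eta(m_0)S(m_1)$, the colinear structure maps are the evaluation $M^\vee\otimes M\to R$ and the coevaluation $R\to M\otimes M^\vee$; these yield the adjunction ${\sf Hom}^H(M^\vee\otimes X,Y)\cong{\sf Hom}^H(X,M\otimes Y)$, i.e.\ $M^\vee$ is a dual of $M$ on one side only. Your step ${\sf Hom}^H(H,R\otimes M)\cong{\sf Hom}^H(H\otimes M^\vee,R)$ needs the other side: the evaluation $M\otimes M^\vee\to R$ it requires is \emph{not} colinear --- the Sweedler computation produces $\sum\eta(m_0)\,m_2S(m_1)$, and $\sum m_2S(m_1)\neq\varepsilon(m_1)1$ in general; it becomes colinear only for the $S^{-1}$-twisted coaction. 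The same defect hits your untwisting: for $N\otimes H$ the map $n\otimes h\mapsto\sum n_0\otimes n_1h$ has inverse $n\otimes h\mapsto\sum n_0\otimes S(n_1)h$, needing only $S$; but for $H\otimes N$ with diagonal coaction your ``first-factor analogue'' $h\otimes n\mapsto\sum n_0\otimes hn_1$, while colinear, has inverse $n\otimes h\mapsto\sum hS^{-1}(n_1)\otimes n_0$ --- attempting to invert with $S$ alone leaves residues of the form $\sum hS(n_2)n_1$, which do not collapse to $\varepsilon$. So both of the ``routine Sweedler verifications'' you deferred actually fail as stated unless the antipode is bijective.

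Bijectivity of $S$ is not available here: in this paper it is a \emph{consequence} of the existence of a non-zero integral (Corollary \ref{S.inj}, which moreover requires $H$ to be $R$-projective), whereas Lemma \ref{FiniteProj} is stated for $R$-flat $H$, is what produces integrals in Corollary \ref{summand}, and is applied in Theorem \ref{B-C.int} to a merely $R$-flat $B$; so invoking $S^{-1}$ in its proof would be circular or would lose the needed generality. The repair is purely a matter of mirroring your two choices: use the valid adjunction ${\sf Hom}^H(H,M)\cong{\sf Hom}^H(M^\vee\otimes H,R)$, with $M^\vee$ on the left, and untwist $M^\vee\otimes H$ by the $\gamma$-type map on the left factor, which needs only $S$; the trivially-coacted finite projective factor then pulls out as you describe. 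This mirrored chain is the paper's proof verbatim. Your preliminary reduction $M\otimes I^r_H\cong{\sf Hom}^H(H,M^{\mathrm{triv}})$ and your observation about the naive map $m\otimes\varphi\mapsto(h\mapsto\varphi(h)m)$ are both correct, but they are bypassed once the mirrored chain is in place.
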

\begin{proof} Using the isomorphism of $H$-comodules 
$$N\otimes H\cong N\otimes H,\quad n\otimes h\mapsto \sum_n n_0\otimes n_1h,$$
where on the source $H$ coacts diagonally and 
on the target $H$ coacts through the coproduct 
on itself,  we have isomorphisms:  
$$M \otimes{\sf Hom}^H(H, R)\cong {\sf Hom}^H((M^*) \otimes H, R) \cong
{\sf Hom}^H (M^*\otimes H, R)\cong
{\sf Hom}^H(H, M),$$
which are explicitly given as follows:
\begin{eqnarray*}m\otimes \varphi&\mapsto&\varphi_m:\eta\otimes h\mapsto \eta(m)\varphi(h),\quad \eta\in M^*\\
&\mapsto& \widehat\varphi_m:\eta\otimes h\mapsto\sum_\eta\eta_0(m)\varphi(\eta_1h) =\sum_m\eta(m_0)\varphi(S(m_1)h)\\
&\mapsto&\overline\varphi_m:h\mapsto \sum_mm_0\varphi(S(m_1)h).\end{eqnarray*}
  \end{proof}

\begin{corollary}\label{summand}
Assume that $H$ has an $R$-finite direct summand as a right comodule on itself. Then $H$ possesses a non-zero left integral.
\end{corollary}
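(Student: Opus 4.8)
The plan is to prove Corollary \ref{summand} by combining Lemma \ref{FiniteProj} with the definition of integrals as comodule homomorphisms. Recall from Subsection \ref{integral} that a left integral on $H$ is precisely an element of ${\sf Hom}^H(H,R)=I^l_H$, where $R$ is regarded as the trivial $H$-comodule. So the goal reduces to showing that the hypothesis (an $R$-finite direct summand of $H$ as a right comodule) forces ${\sf Hom}^H(H,R)$ to be non-zero.

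First I would let $M$ be the given $R$-finite direct summand of $H$ in $\mathcal{M}^H$, so that $H\cong M\oplus M'$ as right $H$-comodules for some complement $M'$. Since $M$ is a direct summand of the $R$-projective module $H$, it is itself $R$-projective, and being $R$-finite it is $R$-finite projective; thus Lemma \ref{FiniteProj} applies to $M$. The key observation is that ${\sf Hom}^H(H,M)$ is non-zero, because the splitting of $H\cong M\oplus M'$ provides the projection $H\twoheadrightarrow M$, a non-zero element of ${\sf Hom}^H(H,M)$ (the identity on the $M$-summand). Indeed since $M$ is a direct summand the inclusion-projection composite is the identity of $M$, which is non-zero as $M\neq 0$.

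Next I would invoke Lemma \ref{FiniteProj} to obtain the isomorphism
$$
{\sf Hom}^H(H,M)\cong M\otimes {\sf Hom}^H(H,R)=M\otimes I^l_H.
$$
Since the left-hand side is non-zero, the right-hand side $M\otimes I^l_H$ is non-zero, and therefore $I^l_H={\sf Hom}^H(H,R)$ cannot be zero. Hence $H$ admits a non-zero left integral, as claimed. (One could equally phrase the same argument on the other side to produce a right integral, but the left-hand statement is what is asserted.)

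The only point requiring a little care is the first step, namely that a direct summand $M$ of $H$ in the comodule category is still $R$-finite projective and hence a legitimate input to Lemma \ref{FiniteProj}; this is immediate here since a direct summand of the $R$-projective module $H$ is $R$-projective, and $M$ is finite by hypothesis. No real obstacle arises: the substance of the corollary is entirely packaged in Lemma \ref{FiniteProj}, and the proof is just the remark that a non-trivial idempotent endomorphism of $H$ factoring through an $R$-finite projective comodule yields a non-zero element of $M\otimes I^l_H$, which forces $I^l_H\neq 0$.
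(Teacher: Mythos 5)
Your proof is correct and is exactly the argument the paper intends (the corollary is stated there without proof, as an immediate consequence of Lemma \ref{FiniteProj}): the projection onto the necessarily non-zero $R$-finite summand $M$ gives a non-zero element of ${\sf Hom}^H(H,M)\cong M\otimes {\sf Hom}^H(H,R)$, which forces $I^l_H\neq 0$. Your preliminary step that $M$ is $R$-finite projective is also fine, and would work even under the weaker standing hypothesis of Subsection \ref{integral} that $H$ is merely $R$-flat, since a direct summand of a flat module is flat and a finite flat module over a Dedekind ring is projective.
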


\begin{remark}
It is not known if the converse to the claim of Corollary \ref{summand} holds true.
\end{remark}

Let us consider the situation of  Theorem \ref{flatness2}.
\begin{theorem}\label{B-C.int}
Assume that $A$ is an $R$-finite saturated normal Hopf subalgebra of an $R$-flat Hopf algebra $B$. Let $C:=B\otimes_AR$ be the quotient Hopf algebra. Then $B$ possesses a non-zero left (resp. right) integral iff $C$ does.
\end{theorem}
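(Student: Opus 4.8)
The plan is to run everything through the two categorical equivalences of Theorem~\ref{flatness2} together with Lemma~\ref{FiniteProj}, reducing the existence of integrals on $B$ and on $C$ to the (non)vanishing of two closely related ${\sf Hom}$-modules. I only treat right integrals; the statement for left integrals follows by the mirrored constructions of \ref{subsection2.3}. First I would record two identifications. Regarding $C$ as a right comodule over itself and $R$ as the trivial right $C$-comodule, a right integral on $C$ is nothing but a morphism in $\mathcal{M}^{C}$, so $I^r_C={\sf Hom}^{C}(C,R)$. Under the equivalence $\Psi=-\square_{C}B:\mathcal{M}^{C}\xrightarrow{\sim}\mathcal{M}^{B}_{A}$ of Theorem~\ref{flatness2} one has $\Psi(C)=C\square_{C}B\cong B$ and $\Psi(R)=R\square_{C}B\cong A$ (the coinvariants $B^{\mathrm{co}(C)}$, cf. Proposition~\ref{f-co-flat}), whence
$$I^r_C\cong{\sf Hom}^{B}_{A}(B,A),$$
the module of Hopf $(B,A)$-module maps $B\to A$. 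On the other hand, since $R$ is Dedekind and $A$ is $R$-finite and flat, $A$ is $R$-finite projective; viewing $A$ as a right $B$-comodule via $\Delta|_A$ and applying Lemma~\ref{FiniteProj} with $H=B$ and $M=A$ gives
$${\sf Hom}^{B}(B,A)\cong A\otimes{\sf Hom}^{B}(B,R)=A\otimes I^r_B.$$
Finally, applying Lemma~\ref{FiniteProj} to the $R$-finite projective Hopf algebra $A$ itself (with $M=A$) shows $A\otimes I^r_A\cong{\sf Hom}^{A}(A,A)\ni\id_A$, so $I^r_A\neq0$; applied to $A^{*}$ it yields dually a nonzero element integral $\Lambda\in A$.

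The implication ``$C$ has an integral $\Rightarrow B$ has one'' is then immediate: every Hopf module map is in particular a comodule map, so ${\sf Hom}^{B}_{A}(B,A)\subseteq{\sf Hom}^{B}(B,A)$, and if $I^r_C\cong{\sf Hom}^{B}_{A}(B,A)\neq0$ then $A\otimes I^r_B\cong{\sf Hom}^{B}(B,A)\neq0$, forcing $I^r_B\neq0$. This direction uses only that the functor $\Psi$ is an equivalence together with Lemma~\ref{FiniteProj}.

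The hard part, and the main obstacle, is the converse: promoting a mere comodule integral on $B$ to an $A$-linear one, i.e. showing ${\sf Hom}^{B}(B,A)\neq0\Rightarrow{\sf Hom}^{B}_{A}(B,A)\neq0$. The idea is to exploit the finite Frobenius structure of $A$ to average over the right $A$-action: starting from a nonzero $\varphi\in I^r_B$ and the element integral $\Lambda\in A$, one forms the comodule map $\overline\varphi_{\Lambda}\in{\sf Hom}^{B}(B,A)$ supplied by Lemma~\ref{FiniteProj} and averages it, using $\Lambda$, into a candidate $\phi\in{\sf Hom}^{B}_{A}(B,A)$. Granting that $\phi$ is nonzero, the two displayed isomorphisms and the faithful flatness of $A$ over $R$ finish the proof. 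The genuine difficulty is precisely to verify that the averaged map does not vanish when $\varphi\neq0$; I expect to establish this by reduction to the fibers, where $A_k\subset B_k$ is a finite normal Hopf subalgebra of a Hopf algebra over a field and the corresponding statement for integrals is classical (the same field-theoretic input from \cite{Sch93} that drives Theorem~\ref{flatness2}), and then to lift the nonvanishing using the faithful flatness of $B$ over $A$ and of $A$ over $R$.
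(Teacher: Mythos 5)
Your setup coincides with the paper's own: the identification $I^r_C\cong{\sf Hom}^B_A(B,A)$ (the paper obtains it from the adjunction of Lemma \ref{lem.adj1}, which does not even require the full equivalence), the isomorphism ${\sf Hom}^B(B,A)\cong A\otimes I^r_B$ from Lemma \ref{FiniteProj}, and your direction ``$C$ has an integral $\Rightarrow$ $B$ has one'' is complete and is exactly the paper's backward argument. The genuine gap is in the direction $B\Rightarrow C$, and it is precisely the step you defer. In fact there is no separate ``averaging'' to perform: choose a nonzero left integral-element $t\in A$, so that $S(t)$ is a right integral-element, $S(t)x=\varepsilon(x)S(t)$ for all $x\in A$ (such $t$ exists since the module of integral-elements of the $R$-finite projective Hopf algebra $A$ is projective of rank one by Pareigis \cite{Par}; your derivation via Lemma \ref{FiniteProj} applied to $A$ and $A^*$ produces the same $\Lambda$). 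Then the map $\varphi_t=\overline\varphi_t$ \emph{already supplied} by Lemma \ref{FiniteProj} at $t\otimes\varphi$, namely $\varphi_t(b)=\sum t_1\varphi(S(t_2)b)$, which equals $\sum\varphi(S(t)b_1)b_2$ because $\varphi$ is a right integral on $B$, is itself $A$-linear. The verification is where the normality hypothesis — which your outline never actually uses — enters:
$$\varphi_t(ba)=\sum\varphi(S(t)b_1a_1)b_2a_2=\sum\varphi\bigl(S(t)[b_1a_1S(b_2)]b_3\bigr)b_4a_2=\sum\varphi(S(t)b_1)b_2a=\varphi_t(b)a,$$
using $\sum b_1a_1S(b_2)\in A$ and the integral-element property of $S(t)$. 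Without this (or an equivalent) computation your candidate $\phi$ is not even defined, so the proof is missing its central step.

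Moreover, your diagnosis that the nonvanishing of $\phi$ is ``the genuine difficulty,'' to be settled by reduction to fibers, is both unnecessary and unsupported. Unnecessary: Lemma \ref{FiniteProj} gives an \emph{isomorphism} $A\otimes I^r_B\xrightarrow{\sim}{\sf Hom}^B(B,A)$, and $t\otimes\varphi\neq0$ in $A\otimes I^r_B$ (both modules are $R$-flat, hence torsion-free, so this may be checked on the generic fiber, where a tensor of two nonzero vectors is nonzero); thus $\varphi_t\neq0$ at once, and the bijectivity of the adjunction in Lemma \ref{lem.adj1} then converts $\varphi_t$ into a nonzero $\psi\in I^r_C$, explicitly $\psi(\bar b)=\varphi(S(t)b)$. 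Unsupported: the field-level statement you invoke as classical (descent of a nonzero integral from $B_k$ to $C_k$ for a finite normal $A_k$) is not in \cite{Sch93}, which concerns freeness, and its proof over a field is essentially the computation displayed above — so your plan defers the missing idea rather than supplying it. The fiberwise lifting would also be delicate at closed points: a nonzero $\varphi:B\to R$ can have image inside a maximal ideal, making $\varphi_k=0$ at the corresponding residue field, so only the generic fiber is safe (and base change for integrals, Proposition \ref{pro.IntMod}, has hypotheses of its own). The paper's argument avoids all of this and is purely algebraic once Theorem \ref{flatness2} provides the faithful flatness.
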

\begin{proof} The idea is to use the equivalence between $\mathcal M^C$ and $\mathcal M^B_A$ established in Proposition \ref{thm_T1}, based on the fact that $B$ is faithfully flat on $A$ by means of Theorem \ref{flatness2}.

The submodule of right integral-elements in $A$ is defined as follows:
$$A_\varepsilon :=\{ t\in A |  
ta=\varepsilon(a)t, \forall a \in A \}.$$ 
It follows from the definition, that this submodule is saturated in $A$.
Since $A$ is $R$-finite, according to \cite{Par},  $A_\varepsilon$ is projective  of rank 
$1$ as an $R$-module, furthermore the  antipode of $A$ is bijective.

Assume that $B$ posseses a non-zero integral.
According to Lemma \ref{FiniteProj}, there is an isomorphism
$$A\otimes {\sf Hom}^B(B,R)\cong {\sf Hom}^B(B,A),\quad a\otimes \varphi\mapsto
\varphi_a:b\mapsto \sum_aa_1\varphi(S(a_2)b).$$
Notice that $\varphi$ satisfies
$$\sum_aa_1\varphi(S(a_2)b)=
\sum_b\varphi(S(a)b_1)b_2.$$ 
Let $t$ be such that $S(t)$ is a right integral-element in $A$ (i.e. $t$ is a left integral-element) and $\varphi$ be a right integral on $B$ (i.e. $\varphi\in {\sf Hom}^B(B,R)$). We show that $\varphi_t:B\to A$ is $A$-linear hence it will yield a right integral on $C$.
Indeed, this follows immediately from the definition of $t$, $\varphi$ and the normality of $A$ in $B$. One has to check that
$$\varphi_t(ba)=\varphi_t(b)a.$$ 
Recall that $A$ is normal in $B$, thus we have $\sum_bb_1aS(b_2)\in A$. Hence, using the definition of $t$ we have
$$\varphi_t(ba)=\sum_{t,a,b}\varphi(S(t)b_1a_1)b_2a_2=
\sum_{t,a,b}\varphi(S(t)[b_1a_1S(b_2)]b_3)b_4a_2=
\sum_{t,b}\varphi(S(t)b_1)b_2a=\varphi_t(b)a.
$$
Hence $\varphi_t\in{\sf Hom}^B_A(B,A)$. According to Lemma \ref{lem.adj1}, $\varphi_t$ induces a map $\psi:C\to R$ in $\mathcal M^C$, 
$$\psi(\bar b):=\varepsilon(\varphi_t(b))=\varphi(S(t)b),$$ where $\bar b\in C$ is the coset represented by $b\in B$. 

The converse statement is proved by following backward the arguments above.\end{proof}

\subsection{The comodule ${\sf Rat}(H^*)$}
In this subsection we shall use the comodule ${\sf Rat}(H^*)$ to study the module of integrals. We shall assume that $H$ is an $R$-projective  Hopf algebra with the $R$-dual denoted by $H^*$.  The convolution product on 
$H^*$ is distinguished by a dot ``$\cdot$''.
Any right comodule $M \in \mathcal{M}^H$ is a left 
module over $H^*$ by the action 
$$H^* \otimes M \to M,\quad f\cdot m=\sum_m m_0\otimes f(m_1),\quad
f\in H^*, m\in M.$$
One shows that $\mathcal{M}^H$ is a full 
subcategory of $_{H^*}\mathcal{M}$ of left $H^*$-modules. An $H^*$-module $M$ is called rational if the action of $H^*$ is induced from a coaction of $H$ in the way described above.
Each $M \in  { _{H^*}\!\mathcal{M}}$ contains a unique maximal rational submodule 
denoted by ${\sf  Rat}(M)$ - the sum of all rational submodules of $M$.
 The rational module ${\sf Rat}(H^*)$ can be characterized by  finiteness conditions (see \cite[5.3, 5.4]{Wis} or \cite[1.4]{CM}): 
\begin{eqnarray*}{\sf Rat}(H^*)&=&\{f \in H^*| H^*\cdot f \textit{ is an $R$-finite submodule of $H^*$}\}\\
&=&\{f \in H^*| f\cdot H\textit{ is an $R$-finite submodule of $H$}\}.\end{eqnarray*}
 
One checks that ${\sf Rat}(H^*)$ is a right Hopf module 
and hence  (\cite[Lemma 3.3]{CM})
\begin{equation}\label{Hop_module}{\sf Rat}(H^*)\cong{\sf Rat}
(H^*)^\text{co$(H)$}\otimes H.\end{equation}

Notice that by definition, ${\sf Rat}(H^*)^\text{co$(H)$}$ consists of linear maps $\varphi:H\to R$ satisfying
$$\rho(\varphi)=\varphi\otimes 1,$$
where $\rho$ denotes the coaction of $H$ on ${\sf Rat}(H^*)$. This is equivalent to  saying that $\varphi$ is a left integral on $H$.
 
\begin{lemma} The $R$-module ${\sf Rat}(H^*)$ is flat and it is saturated in $H^*$. 
\end{lemma}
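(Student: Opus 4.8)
The plan is to establish the two assertions separately, each reducing to a torsion-freeness argument over the Dedekind ring $R$, so that the properties recalled in Section \ref{co-flat} (a torsion-free $R$-module is flat, and a submodule of a flat module is flat) apply directly.

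First I would prove flatness. The key observation is that $H^*={\sf Hom}_R(H,R)$ is itself torsion-free: if $rf=0$ for some $f\in H^*$ and some nonzero $r\in R$, then $rf(h)=0$ in $R$ for every $h\in H$, and since a Dedekind ring is an integral domain this forces $f(h)=0$, whence $f=0$. Thus $H^*$ is torsion-free, hence $R$-flat, and ${\sf Rat}(H^*)$, being an $R$-submodule of $H^*$, is $R$-flat as well. (Alternatively one may invoke the Hopf-module decomposition \eqref{Hop_module}, ${\sf Rat}(H^*)\cong{\sf Rat}(H^*)^{\text{co}(H)}\otimes H$, but this again rests on the torsion-freeness of the submodule ${\sf Rat}(H^*)^{\text{co}(H)}\subset H^*$, so the direct argument is cleanest.)

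Next I would prove saturation, i.e. that $H^*/{\sf Rat}(H^*)$ is torsion-free: if $f\in H^*$ satisfies $rf\in{\sf Rat}(H^*)$ for some nonzero $r\in R$, I must show $f\in{\sf Rat}(H^*)$. Here I would use the finiteness characterization ${\sf Rat}(H^*)=\{f\in H^*\mid f\cdot H\text{ is an $R$-finite submodule of }H\}$. Since the $H^*$-action on $H$ is $R$-bilinear, one has $(rf)\cdot H=r\,(f\cdot H)$ as $R$-submodules of $H$. As $H$ is $R$-projective, hence torsion-free, multiplication by the nonzero scalar $r$ is injective on $H$ and therefore restricts to an $R$-module isomorphism $f\cdot H\xrightarrow{\sim}r\,(f\cdot H)=(rf)\cdot H$. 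The hypothesis $rf\in{\sf Rat}(H^*)$ says the target is $R$-finite, so $f\cdot H$ is $R$-finite and $f\in{\sf Rat}(H^*)$, as required.

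The arguments are elementary, so there is no serious obstacle; the only points demanding care are confirming the $R$-linearity relation $(rf)\cdot h=r\,(f\cdot h)$ that underlies $(rf)\cdot H=r\,(f\cdot H)$, and selecting the module-side finiteness characterization $\{f\mid f\cdot H\text{ is }R\text{-finite}\}$ (rather than the $H^*$-side one) so that the torsion-freeness of $H$ can be exploited. I expect the saturation step to be the substantive part, flatness following immediately from torsion-freeness.
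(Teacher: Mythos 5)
Your proposal is correct, and it follows the same overall outline as the paper's proof: flatness is dispatched via torsion-freeness over the domain $R$ (the paper simply calls this claim obvious), and saturation is reduced to the module-side characterization ${\sf Rat}(H^*)=\{f\in H^*\mid f\cdot H\text{ is $R$-finite}\}$ together with the identity $(rf)\cdot H=r\,(f\cdot H)$. Where you genuinely diverge is the final step. The paper argues: $(rf)\cdot H$ is $R$-finite, and since $H$ is $R$-projective the saturation of this submodule in $H$ is again $R$-finite; as $f\cdot H$ sits inside that saturation and $R$ is Noetherian, $f\cdot H$ is $R$-finite. You instead observe that multiplication by $r$ is injective on the torsion-free module $H$ and therefore restricts to an $R$-module isomorphism $f\cdot H\xrightarrow{\sim}(rf)\cdot H$, so $f\cdot H$ inherits finite generation directly. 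Your variant is more elementary: it bypasses the (true but not completely trivial) fact that a finite submodule of a projective module over a Dedekind ring has finite saturation --- which in general needs something like Kaplansky's decomposition of projectives to reduce to a finite direct summand --- and it uses only torsion-freeness of $H$, not its projectivity (projectivity is of course still needed for the finiteness characterization of ${\sf Rat}(H^*)$ itself, which both arguments take as a black box from the cited references). As a proof of this particular lemma, your version is shorter and logically leaner; the paper's route has the side benefit of invoking the saturation-finiteness property of projective modules, which is a useful fact in its own right elsewhere in this circle of ideas.
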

\begin{proof} The first claim is obvious. We prove the second claim.
For any non-zero $r \in R$ and $f \in H^*$, suppose that $rf \in {\sf Rat}(H^*)$. Then $(rf)\cdot H\subset H$ is an $R$-finite submodule. Since $H$ is projective, the saturation of this submodule in $H$ is $R$-finite, thus $f\cdot H$ is also $R$-finite.
\end{proof}

\begin{proposition}\label{pro.IntMod}
Let $H$ be an $R$-projective Hopf algebra.
 Suppose $H$ possesses a non-zero integral. Then the module of  integrals on $H$ is
$R$-projective of rank one. Further, the module of integrals respects base change, i.e. $(I^l_H)_k\cong I^l_{H_k}$ for $k$ being any residue field and
$(I^l_{H})_\mathfrak{m}\cong I^l_{H_\mathfrak{m}}$
for any prime ideals $\mathfrak m$.
\end{proposition}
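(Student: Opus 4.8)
The plan is to realise the module of integrals inside the dual of $H$ and to reduce every assertion to the fibre-wise uniqueness of integrals over a field. Recall from the discussion preceding the statement that $I^l_H={\sf Rat}(H^*)^{\text{co}(H)}$, that this module sits inside $H^*$ (hence is torsion-free, so $R$-flat), and that ${\sf Rat}(H^*)\cong I^l_H\otimes H$ by the Hopf-module structure isomorphism \eqref{Hop_module}. The first thing I would pin down is the rank. Since $H$ is $R$-flat, the assignment $\varphi\mapsto\varphi_K$ (extend scalars to the fraction field $K$) embeds $H^*$ into $H_K^*=\operatorname{Hom}_K(H_K,K)$ and sends an integral on $H$ to an integral on $H_K$; thus $I^l_H\hookrightarrow I^l_{H_K}$. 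As $H$ carries a non-zero integral $\varphi$, the class $\varphi_K\neq 0$ shows $H_K$ is co-Frobenius, so by the uniqueness of integrals over a field (see \cite{CM}, \cite{Wis}) one has $I^l_{H_K}=K\varphi_K$, a one-dimensional space. Consequently $I^l_H$ is a torsion-free $R$-module of rank one and $I^l_H\otimes_R K\cong I^l_{H_K}$.

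Next I would prove projectivity by identifying $I^l_H$ with a fractional ideal. Working inside $H_K^*$, rank one forces every integral on $H$ to be of the form $c\varphi_K$ with $c\in K$, so $I^l_H=H^*\cap K\varphi_K$. Now $c\varphi_K$ lies in $H^*$ precisely when $c\,\varphi(H)\subseteq R$, and every such $c\varphi_K$ is again an integral; hence $I^l_H\cong\{c\in K:\ c\,\varphi(H)\subseteq R\}=\mathfrak b^{-1}$, where $\mathfrak b:=\varphi(H)$ is a non-zero ideal of $R$. Over the Dedekind ring $R$ the ideal $\mathfrak b$ is invertible, so $\mathfrak b^{-1}$, and therefore $I^l_H$, is projective of rank one. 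This settles the first assertion.

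For the behaviour under base change I would rerun the same computation after localisation. Over the discrete valuation ring $R_{\mathfrak m}$ the element $\varphi\otimes 1$ is again a non-zero integral on $H_{\mathfrak m}$, so the fractional-ideal description applies verbatim and yields $I^l_{H_{\mathfrak m}}\cong\bigl((\varphi\otimes 1)(H_{\mathfrak m})\bigr)^{-1}=(\mathfrak b R_{\mathfrak m})^{-1}=(\mathfrak b^{-1})_{\mathfrak m}\cong (I^l_H)_{\mathfrak m}$, the isomorphism being the one induced by $\varphi\mapsto\varphi\otimes 1$. For a residue field $k=R/\mathfrak m$ I would reduce through the valuation ring, using $(I^l_H)_k\cong (I^l_{H_{\mathfrak m}})\otimes_{R_{\mathfrak m}}k$. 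Since $I^l_{H_{\mathfrak m}}\cong(\mathfrak b^{-1})_{\mathfrak m}$ is free of rank one over $R_{\mathfrak m}$, it admits a generator $\varphi_0$ with $\varphi_0(H_{\mathfrak m})=R_{\mathfrak m}$; its reduction $\bar\varphi_0$ is then a non-zero integral on $H_k$, which by the uniqueness of integrals over $k$ spans the one-dimensional space $I^l_{H_k}$. Hence $(I^l_H)_k\cong I^l_{H_k}$, as required.

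The step I expect to demand the most care is this final non-degeneration: one must ensure that the local integral does not reduce to zero modulo $\mathfrak m$ and that one-dimensionality survives reduction. This is exactly what the \emph{primitivity} of the generator $\varphi_0$ (the equality $\varphi_0(H_{\mathfrak m})=R_{\mathfrak m}$, which produces an element on which $\varphi_0$ takes a unit value) guarantees, in tandem with the field-case uniqueness. Organising the whole argument around a single fixed integral $\varphi$ and the ideal $\varphi(H)$ also lets me sidestep the mismatch between $(H^*)_{\mathfrak m}$ and $(H_{\mathfrak m})^*$ (equivalently between $H^*\otimes_R K$ and $H_K^*$) that would otherwise obstruct the base change when $H$ is not finitely generated over $R$.
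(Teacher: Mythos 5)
Your proof is correct, and although it opens exactly as the paper's does, its second half takes a genuinely different route. The shared part is the first step: embedding $I^l_H$ into $I^l_{H_K}$ via $H^*\hookrightarrow (H_K)^*$ and invoking the uniqueness of integrals over a field to get rank one. For projectivity the paper (following its referee) evaluates integrals at a suitable element $a\in H$ to inject $I^l_H$ into $R$, realizing it abstractly as an ideal of $R$; you instead pin it down explicitly as the inverse fractional ideal $\mathfrak b^{-1}$ of the value ideal $\mathfrak b=\varphi(H)$ of one fixed integral --- a variant in the same spirit, with the bonus of an explicit model (your verification that every $c\varphi_K$ with $c\mathfrak b\subseteq R$ restricts to an integral on $H$ is the point that makes this legitimate, and it goes through since the integral identity is checked inside $H\subseteq H_K$). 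The real divergence is in the base-change claims: the paper works through ${\sf Rat}(H^*)$, using its saturation in $H^*$ together with the identification $(H^*)_k\cong (H_k)^*$ (valid because $k$ is $R$-finite and $H$ is $R$-projective) to get a natural injection $(I^l_H)_k\hookrightarrow I^l_{H_k}$, and concludes by a dimension count, with localizations handled by a brief exactness/faithfulness remark. You bypass ${\sf Rat}(H^*)$ entirely: you localize the fractional-ideal description to get $(I^l_H)_{\mathfrak m}\cong I^l_{H_{\mathfrak m}}\cong(\mathfrak b_{\mathfrak m})^{-1}$, and for the residue field you reduce a primitive generator $\varphi_0$ (whose property $\varphi_0(H_{\mathfrak m})=R_{\mathfrak m}$ is immediate from your identification, since the generator corresponds to a generator $c_0$ of $(\mathfrak b R_{\mathfrak m})^{-1}$ and $c_0\mathfrak b R_{\mathfrak m}=R_{\mathfrak m}$), so nonvanishing of $\bar\varphi_0$ is built in and uniqueness over $k$ finishes the count. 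What each approach buys: the paper's stays inside the rational-module formalism that it reuses elsewhere (e.g.\ \eqref{Hop_module} and Proposition \ref{Integral2}) and produces the comparison map $(I^l_H)_k\to I^l_{H_k}$ in a structural way; yours is more elementary and self-contained, treats localizations and residue fields uniformly through the single ideal $\mathfrak b$, and --- as you correctly note --- sidesteps the mismatch between $(H^*)_{\mathfrak m}$ (or $(H^*)_K$) and $(H_{\mathfrak m})^*$ (or $(H_K)^*$) that arises when $H$ is not $R$-finite. Both arguments ultimately rest on the same external input, the at-most-one-dimensionality of the space of integrals on a Hopf algebra over a field.
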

\begin{proof} Let $K$ be the quotients field of $R$. Any $R$-linear map $f:H\to R$ determines a $K$-linear map
$f_K:H_K\to K$. The induced map  $(H^*)_K \to (H_K)^*$ is injective (generally not surjective!), giving rise to the inclusion ${\sf Rat}(H^*)_K \inj {\sf Rat}((H_K)^*)$, and hence the inclusion 
$$({\sf Rat}(H^*)^\text{co$(H)$})_K\inj {\sf Rat}((H_K)^*)^\text{co$(H_K)$}.$$
Thus if $H$ possesses a non-zero left integral, i.e. $I^l_H\neq 0$, then its flatness implies $(I_H^l)_K\neq 0$ and hence has dimension 1 over $K$. Consequently, if $V$ is an non-zero $R$-submodule of $I^l_H$ the quotient $I^l_H/V$ has to be $R$-torsion. On the other hand, there exists $a\in H$ such that the $R$-linear map $\varphi:I^l_H\to R$, $f\mapsto  f(a)$ is non-trivial. Then $I^l_H/\ker\varphi$ is $R$-torsion free. By the remark above $\ker\varphi=0$, i.e. $\varphi$ is injective and $I^l_H$ is isomorphic to an ideal of the Dedekind ring $R$, hence is a projective module of rank 1 over $R$.
In particular we have $(I^l_H)_k$ is non-zero for any residue field $k$\footnote{ This simple argument for the projectivity and the rank of $I_H^l$ was suggested to us by the anonymous referee.}.

Let $k$ be a residue field of $R$, $k=R/\mathfrak m$. Since $H$ is $R$-projective and $k$ is $R$-finite, we have
$${\rm Hom}_R(H,R)\otimes_Rk\cong {\rm Hom}_R(H,k)={\rm Hom}_k(H_k,k).$$
Thus $(H^*)_k\cong (H_k)^*.$ 
 The above lemma shows that the natural map
$$\xymatrix{
{\sf Rat}(H^*)_k\ar@{^(->}[r]& (H^*)_k\cong (H_k)^*}$$
is injective, its image is by construction contained in ${\sf Rat}((H_k)^*)$. This map also induces the inclusion on the co-invariants: 
$$(I_H^l)_k \to I_{H_k}^l.$$
Thus, if $ (I_H^l)_k\neq 0$, it has to be one-dimensional and the map above is an isomorphism. The same holds for the localization at any prime ideals of $R$, as this functor is exact, and faithful when restricted to flat modules.
\end{proof}
\begin{corollary}\label{S.inj}
Assume that an $R$-projective Hopf algebra $H$ possesses a non-zero left integral. Then the antipode of $H$ is bijective.
\end{corollary}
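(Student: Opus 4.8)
The plan is to check the bijectivity of the antipode $S\colon H\to H$ one fibre at a time, invoke the classical statement over a field, and then lift the conclusion back to $R$ by a flatness argument. Since being an isomorphism is a local property, it suffices to prove that the localized map $S_\mathfrak{m}\colon H_\mathfrak{m}\to H_\mathfrak{m}$ is bijective for every maximal ideal $\mathfrak{m}$ of $R$; here $R_\mathfrak{m}$ is a discrete valuation ring with residue field $k=R/\mathfrak{m}$ and fraction field $K$. Because the Hopf structure respects base change and the antipode is determined uniquely by the coalgebra and algebra structures, the maps $S_\mathfrak{m}$, $S_k$ and $S_K$ are precisely the antipodes of the Hopf algebras $H_\mathfrak{m}$, $H_k$ and $H_K$.

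First I would pass to the fibres. By Proposition \ref{pro.IntMod} the module $I^l_H$ is $R$-projective of rank one and respects base change, so $(I^l_H)_k\cong I^l_{H_k}$ and $(I^l_H)_K\cong I^l_{H_K}$ are non-zero for every residue field $k$ and for $K$. Thus each fibre $H_k$ and $H_K$ is a Hopf algebra over a field possessing a non-zero left integral. It is classical that such a Hopf algebra is co-Frobenius and therefore has bijective antipode. Consequently $S_k$ and $S_K$ are bijective for all $\mathfrak{m}$.

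It remains to deduce that $S_\mathfrak{m}$ is bijective over the discrete valuation ring $R_\mathfrak{m}$. Injectivity is immediate: $\ker(S_\mathfrak{m})$ is a submodule of the torsion-free module $H_\mathfrak{m}$, hence torsion-free, and $\ker(S_\mathfrak{m})\otimes_{R_\mathfrak{m}}K=\ker(S_K)=0$ by exactness of localization, so $\ker(S_\mathfrak{m})=0$. For surjectivity set $Q:=\mathrm{coker}(S_\mathfrak{m})$ and apply the homology long exact sequence to the short exact sequence $0\to H_\mathfrak{m}\xrightarrow{S_\mathfrak{m}}H_\mathfrak{m}\to Q\to 0$, tensoring once with $k$ and once with $K$. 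Since $H_\mathfrak{m}$ is $R_\mathfrak{m}$-flat we have ${\rm Tor}^{R_\mathfrak{m}}_1(H_\mathfrak{m},k)=0$, so the sequence reads $0\to {\rm Tor}^{R_\mathfrak{m}}_1(Q,k)\to H_k\xrightarrow{S_k}H_k\to Q\otimes_{R_\mathfrak{m}}k\to 0$; the injectivity of $S_k$ forces ${\rm Tor}^{R_\mathfrak{m}}_1(Q,k)=0$, which over a DVR means that $Q$ is torsion-free. On the other hand the surjectivity of $S_K$ gives $Q\otimes_{R_\mathfrak{m}}K=0$, which means that $Q$ is a torsion module. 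A module over a DVR that is simultaneously torsion-free and torsion must vanish, so $Q=0$ and $S_\mathfrak{m}$ is surjective, hence bijective. As this holds for every $\mathfrak{m}$, the map $S$ is bijective.

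The main obstacle is that $H$ need not be $R$-finite, so the naive Nakayama argument (concluding $Q=0$ from $Q\otimes_{R_\mathfrak{m}}k=0$) is unavailable. The point is to circumvent finiteness by extracting the extra vanishing ${\rm Tor}^{R_\mathfrak{m}}_1(Q,k)=0$ from the \emph{injectivity} of $S_k$, which together with $Q\otimes_{R_\mathfrak{m}}K=0$ pins down $Q=0$ on the fibre. The other delicate ingredient is the field-theoretic input that a non-zero integral forces the antipode to be bijective; granting that and Proposition \ref{pro.IntMod}, the rest is routine base change and the local--global principle for isomorphisms.
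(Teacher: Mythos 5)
Your proof is correct and takes essentially the same route as the paper: non-vanishing of integrals on the generic and closed fibers via Proposition \ref{pro.IntMod}, Radford's classical theorem over those fields, and then globalization. The paper compresses the final step into the bare assertion that a map of flat modules whose fibers are all bijective is itself bijective; your localization argument (torsion-freeness of the kernel plus the vanishing of ${\rm Tor}^{R_\mathfrak{m}}_1(Q,k)$ extracted from the injectivity of $S_k$) is precisely the justification the paper leaves implicit.
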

\begin{proof}
It is a classical result due to R. Radford that over a field, the antipode of a Hopf algebra with a non-zero integral is bijective, (\cite[Section 3, Proposition 2]{Ra}). Thus the antipode of $H$ has all fibers being bijective, hence it is itself bijective.
\end{proof}
\begin{corollary}\label{enough_integral}
Assume that an $R$-projective Hopf algebra $H$ possesses a non-zero left integral. Then $H$ ``has enough integrals'' in the sense that the map
\begin{equation}\label{eq.Sch} I^l_H\otimes H\to R,\quad \varphi\otimes h\mapsto \varphi(h)\end{equation}
is surjective, cf. \cite[Rmk~3.2~(4)]{Ps1}.
\end{corollary}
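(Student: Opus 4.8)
The claim of Corollary \ref{enough_integral} is that the evaluation map
\eqref{eq.Sch} is surjective. The plan is to exploit the fact, established
in Proposition \ref{pro.IntMod}, that the formation of the module of
integrals commutes with base change to residue fields and with
localization at prime ideals, and to verify surjectivity locally.

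First I would reduce to a local question. Surjectivity of an $R$-linear
map $\mu: I^l_H\otimes H\to R$ can be checked after localizing at every
maximal ideal $\mathfrak m$ of $R$; equivalently, since $R$ is Dedekind
and the cokernel is finitely generated if we work with a suitable finitely
generated piece, it suffices to show that $\mu\otimes_R k$ is non-zero for
$k=R/\mathfrak m$ each residue field. Because localization and the
residue-field functors are exact, and by Proposition \ref{pro.IntMod} we
have $(I^l_H)_k\cong I^l_{H_k}$ together with $(H^*)_k\cong (H_k)^*$, the
base-changed map $\mu_k$ is identified with the corresponding evaluation
map
$$I^l_{H_k}\otimes H_k\to k,\quad \varphi\otimes h\mapsto\varphi(h)$$
for the Hopf algebra $H_k$ over the field $k$. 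By Proposition
\ref{pro.IntMod} the module $(I^l_H)_k$ is non-zero (indeed
one-dimensional) for every residue field $k$, so $H_k$ possesses a non-zero
integral over the field $k$.

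The heart of the argument is then the classical fact for Hopf algebras over
a field: if $H_k$ has a non-zero integral, then the evaluation pairing
$I^l_{H_k}\otimes H_k\to k$ is surjective, i.e. there is some $h\in H_k$
with $\varphi(h)\neq 0$ for a non-zero integral $\varphi$. This is
immediate over a field, since a non-zero linear functional $\varphi:H_k\to
k$ by definition does not vanish on all of $H_k$. Thus $\mu_k\neq 0$ for
every residue field $k$. Since $R$ is Dedekind and $R$ is the target, a map
$\mu: I^l_H\otimes H\to R$ whose image is not all of $R$ would have image
contained in some maximal ideal $\mathfrak m$, forcing $\mu_k=0$ for
$k=R/\mathfrak m$; the non-vanishing just established rules this out.
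Therefore $\mu$ is surjective.

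The main obstacle to watch is the compatibility of the base-change
identifications: one must be sure that under the isomorphisms
$(I^l_H)_k\cong I^l_{H_k}$ and $(H^*)_k\cong (H_k)^*$ furnished by
Proposition \ref{pro.IntMod}, the reduction $\mu_k$ of the global
evaluation map genuinely agrees with the evaluation map for $H_k$, rather
than with some twist of it. This is a routine but necessary check, and it
relies precisely on the fact that the isomorphism
${\sf Rat}(H^*)_k\inj{\sf Rat}((H_k)^*)$ in the proof of Proposition
\ref{pro.IntMod} is induced by the natural evaluation pairing, so it is
compatible with evaluation against elements of $H$. Once this compatibility
is in hand, the surjectivity follows formally from the field case together
with the fact that $R$ is a Dedekind ring whose proper ideals are contained
in maximal ideals.
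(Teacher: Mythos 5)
Your proof is correct, and it differs from the paper's in a small but instructive way. The paper also verifies surjectivity locally, but it localizes at maximal ideals: by Proposition \ref{pro.IntMod}, $(I^l_H)_{\mathfrak m}\cong I^l_{H_{\mathfrak m}}$, so the localized map is the evaluation map for $H_{\mathfrak m}$ over the discrete valuation ring $R_{\mathfrak m}$, and the paper declares the DVR (more generally PID) case ``obvious''. You instead pass all the way down to the residue fields, using the other half of Proposition \ref{pro.IntMod}, namely $(I^l_H)_k\cong I^l_{H_k}\neq 0$, where the local statement really is trivial: a non-zero functional on a $k$-vector space is surjective. Your closing step --- the image of $\mu$ is an ideal of $R$, a proper ideal lies in some maximal ideal $\mathfrak m$, and then $\mu_k=0$ for $k=R/\mathfrak m$, contradicting the non-vanishing just shown --- is a clean substitute for the ``surjectivity is a local property'' reduction. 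What your route buys: the paper's ``obvious'' DVR case, when unpacked, requires knowing that the free rank-one module $I^l_{H_{\mathfrak m}}=R_{\mathfrak m}\varphi$ has a generator with $\varphi(H_{\mathfrak m})\not\subseteq\mathfrak m R_{\mathfrak m}$ (e.g.\ via saturation of the module of integrals in $H^*$), which is essentially the residue-field statement you invoke directly; so your argument makes explicit exactly the content hidden in the paper's first sentence. You were also right to flag the compatibility check as the one genuine point of care: the isomorphism $(I^l_H)_k\cong I^l_{H_k}$ in the proof of Proposition \ref{pro.IntMod} is induced by reduction of functionals modulo $\mathfrak m$ (via ${\sf Rat}(H^*)_k\inj (H_k)^*$), so it does intertwine the reduction $\mu_k$ of the global evaluation map with the evaluation pairing for $H_k$, as your argument requires.
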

\begin{proof}
If $R$ is a discrete valuation ring (or more general, a PID), the conclusion if obvious. For a general Dedekind ring $R$, the localization of $R$ at a maximal ideal $\mathfrak m$ is a discrete valuation ring. By means of Propostion \ref{pro.IntMod}, all localizations at maximal ideals of the map $I^l_H\otimes H\to R$ are surjective, hence so is the original map.
\end{proof}

Corollary \ref{enough_integral} together with a remark of Schneider-Schauenburg \cite[Rmk~3.2~(4)]{Ps1} yields the following result.
 
\begin{proposition}\label{Integral2}Let $H$ be  an $R$-projective Hopf algebra. Then  $H$ possesses a non-zero left integral if and only if $H$ is projective in ${}_{H^*}\!\mathcal{M}$. In particular, if $H$ possesses a non-zero left integral then it is projective in $\mathcal M^H$.
\end{proposition}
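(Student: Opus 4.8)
The plan is to reduce the statement to the equivalence ``$I^l_H\neq 0\iff {\sf Rat}(H^*)\neq 0$'', which is immediate from the Hopf module isomorphism \eqref{Hop_module}, and then to relate the nonvanishing of ${\sf Rat}(H^*)$ to the projectivity of $H$ in ${}_{H^*}\!\mathcal{M}$. Throughout I use that, under the hypothesis that $H$ is $R$-projective, the comodule category $\mathcal M^H$ is identified with the full subcategory of rational left $H^*$-modules inside ${}_{H^*}\!\mathcal{M}$, which is closed under submodules, quotients and arbitrary direct sums (\cite{Wis}, \cite{CM}); in particular $H$ itself is a rational module, and $\mathcal M^H$ is abelian with the forgetful functor to $R$-modules exact.

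For the direction ``nonzero integral $\Rightarrow$ projective'', I would start from Corollary \ref{enough_integral}: the existence of a nonzero left integral forces $H$ to have enough integrals, i.e. the evaluation map \eqref{eq.Sch} is surjective, so there are $\varphi_i\in I^l_H$ and $h_i\in H$ with $\sum_i\varphi_i(h_i)=1$. This finite datum is exactly what is needed to build a dual basis exhibiting $H$ as a direct summand of a free $H^*$-module, which is the content of the cited remark \cite[Rmk~3.2~(4)]{Ps1}. I expect this to be the main obstacle: one must manufacture $H^*$-linear maps $H\to H^*$ out of the integrals $\varphi_i$ (using the left integral identity and the antipode, which is bijective by Corollary \ref{S.inj}) and verify the dual basis identity against the $H^*$-action $f\cdot h=\sum_h h_1f(h_2)$. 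The delicate point is that projectivity is asserted in the whole module category ${}_{H^*}\!\mathcal M$, not merely among rational modules, so a splitting internal to $\mathcal M^H$ would not be enough.

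For the converse I would argue directly and self-containedly. If $H$ is projective in ${}_{H^*}\!\mathcal M$, then $H$ is a direct summand of some free module $(H^*)^{(X)}$, so there is a split monomorphism $H\inj (H^*)^{(X)}$; since $H\neq 0$ at least one component $j\colon H\to H^*$ is a nonzero $H^*$-linear map. As $H$ is rational and rationality passes to quotients, the image $j(H)$ is a nonzero rational submodule of $H^*$, whence ${\sf Rat}(H^*)\neq 0$. By \eqref{Hop_module} we have ${\sf Rat}(H^*)\cong {\sf Rat}(H^*)^\text{co$(H)$}\otimes H\cong I^l_H\otimes H$, and since $H\neq 0$ this forces $I^l_H\neq 0$; that is, $H$ possesses a nonzero left integral.

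Finally, the ``in particular'' clause follows formally. Since the forgetful functor $\mathcal M^H\to\mathcal M_R$ is exact and faithful, an epimorphism in $\mathcal M^H$ is surjective and hence an epimorphism in the ambient category ${}_{H^*}\!\mathcal M$. Given that $H$ is a projective object of ${}_{H^*}\!\mathcal M$ lying in $\mathcal M^H$, every lifting problem posed in $\mathcal M^H$ can be solved in ${}_{H^*}\!\mathcal M$, and fullness of the inclusion guarantees that the resulting lift is again a comodule morphism. Thus $H$ is projective in $\mathcal M^H$ as well.
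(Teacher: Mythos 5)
Your proposal is correct, and it differs from the paper in an instructive way: the paper's entire proof of Proposition \ref{Integral2} is the one-line reduction ``Corollary \ref{enough_integral} together with \cite[Rmk~3.2~(4)]{Ps1}'', i.e.\ both implications (and implicitly the ``in particular'' clause) are delegated to the Schauenburg--Schneider remark, which asserts the equivalence of having enough integrals with projectivity over $H^*$. Your forward direction follows the same route --- Corollary \ref{enough_integral} supplies $\varphi_i\in I^l_H$, $h_i\in H$ with $\sum_i\varphi_i(h_i)=1$, and the dual-basis construction is exactly what the cited remark contains --- and you correctly flag the one genuinely delicate point, namely that the splitting must live in all of ${}_{H^*}\!\mathcal M$ and not just among rational modules. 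Your converse, however, replaces the external citation by a short self-contained argument: a nonzero $H^*$-linear component $j\colon H\to H^*$ of a splitting into a free module has rational image, so ${\sf Rat}(H^*)\neq 0$, and then \eqref{Hop_module} together with the identification ${\sf Rat}(H^*)^{\text{co}(H)}=I^l_H$ forces $I^l_H\neq 0$. This buys independence from \cite{Ps1} for that direction, at the modest cost of invoking the closure of rational modules under quotients and the fullness of $\mathcal M^H\subset{}_{H^*}\!\mathcal M$; these do hold here, but only because $R$-projectivity of $H$ guarantees the local-projectivity ($\alpha$-)condition underlying \cite{Wis} and \cite{CM} --- over a general base, or for $H$ merely flat, these closure properties can fail, so it is worth saying so explicitly. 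Two small polish points: in the converse, what forces $I^l_H\neq 0$ is simply that ${\sf Rat}(H^*)\cong I^l_H\otimes H$ is nonzero (the clause ``since $H\neq 0$'' is not the operative fact); and your formal deduction of the ``in particular'' clause (epimorphisms in $\mathcal M^H$ are surjective because cokernels are computed $R$-linearly, and fullness makes the $H^*$-linear lift colinear) is correct and fills in a step the paper leaves tacit.
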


\begin{remark}
If $H$ possesses a non-zero integral, then so does $H_K$. The converse is not true as can be seen from the following example. Let $R$ be a discrete valuation ring with unifomizer $\pi$. Consider the commutative Hopf algebra
$$H:=R[x,y]/(x+y+\pi xy).$$ 
The Hopf algebra structure is give by \begin{eqnarray*}\Delta(x)=1\otimes x+x\otimes 1+\pi x\otimes x,\\ \Delta(y)=1\otimes y+y\otimes 1+\pi y\otimes y,\\ 
S(x)=y,\quad S(y)=x,\\ \varepsilon(x)=\varepsilon(y)=0.\end{eqnarray*}

In fact $H$ is the coordinate ring of the Neron blow-up of the group scheme $\mathbb G_{m,R}={\sf Spec}R[uv]/(uv-1)$ at the unit element of the closed fiber, see, e.g., \cite{DHS}.

Now we see that $H_K$ is isomorphic to $K[u,v]/(uv-1)$, by means of the map $u\mapsto 1+\pi x$, $v\mapsto 1+\pi y$. Thus $H_K$ possesses a non-zero integral. On the other hand, $H_k$ is isomorphic to $k[x]$ (the coordinate ring of the additive group), on which there is no non-zero integrals. Finally $H$ is projective   according to \cite[Prop.~3.1.7]{DH}.
\end{remark}

\begin{question}
Let $\mathcal H$ be a Hopf algebra over $K$, which possesses a non-zero left integral. Does there exist an $R$-projective Hopf algebra $H$ possessing a non-zero left integral such that $H_K\cong \mathcal H$? (For commutative Hopf algebras, such an $H$ is called a model of $\mathcal H$.)
\end{question}

\section{Projectivity over Hopf subalgebras} \label{Secpro}
Our aim in this section is to give a condition for an 
$R$-flat Hopf algebra to be projective over a saturated 
finite normal Hopf subalgebra. We shall keep 
the settings and assumptions of the previous section. 
Thus let $R$ be a Dedekind ring and assume that $B$ is an $R$-flat Hopf algebra and $A$ is an 
$R$-finite saturated normal Hopf subalgebra of
$B$. Then according to Theorem \ref{flatness2}, $B$ is faithfully flat over $A$ and faithfully co-flat over $C:=B\otimes_AR=B/A^+B$ (note that $A^+B= BA^+$), this last Hopf algebra is also $R$-flat. 

\subsection{} 
For $b\in B$ denote $\bar b$ its coset in $C$. The map 
$$ f_B:A_\varepsilon \otimes C
\longrightarrow B,\quad f_B(t\otimes\bar b):= tb$$ 
is well-defined.   
 The following result is a generalization of  \cite[Lemma 3.2]{L.Ka}.
\begin{proposition} \label{lm} The map $f_B$ is right $B$-linear, right $C$-colinear and bijective on its image. 
Thus $A_\varepsilon\otimes C \cong A_\varepsilon B$ as $B$-modules.
 \end{proposition}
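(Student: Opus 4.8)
The plan is to dispatch well-definedness, right $B$-linearity and right $C$-colinearity by direct computation, and then to reduce the remaining injectivity statement to the already known case over a field by a base-change argument. First I would record the structures making $f_B$ a morphism: $A_\varepsilon\otimes C$ is a right $B$-module through the action on the $C$-factor, $(t\otimes\bar b)\cdot b'=t\otimes\overline{bb'}$, and a right $C$-comodule through $\mathrm{id}\otimes\Delta_C$, while $B$ carries its induced right $C$-comodule structure $b\mapsto\sum b_1\otimes\overline{b_2}$. Well-definedness follows from $t\cdot A^+B=0$ for $t\in A_\varepsilon$: if $a'\in A^+$ then $ta'=\varepsilon(a')t=0$, so $t(a'b)=0$. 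Right $B$-linearity is then immediate from $f_B(t\otimes\overline{bb'})=t(bb')=(tb)b'$. For right $C$-colinearity I would use the identity $\overline{ab}=\varepsilon(a)\bar b$ for $a\in A$ (valid because $a-\varepsilon(a)1\in A^+$) together with the counit axiom $\sum t_1\varepsilon(t_2)=t$: writing $\rho_B(tb)=\sum t_1b_1\otimes\overline{t_2b_2}=\sum t_1\varepsilon(t_2)b_1\otimes\overline{b_2}=\sum tb_1\otimes\overline{b_2}$ matches $(f_B\otimes\mathrm{id})(\sum t\otimes\overline{b_1}\otimes\overline{b_2})$. Since the image of $f_B$ is by construction the $R$-span $A_\varepsilon B$, it remains only to prove that $f_B$ is injective.

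For injectivity I would argue that it suffices to check it after applying $-\otimes_RK$, where $K$ is the fraction field of $R$. Indeed $A_\varepsilon$ is $R$-projective of rank one (Pareigis) and $C$ is $R$-flat, so $A_\varepsilon\otimes C$ is torsion-free; hence $\ker f_B$ is torsion-free, and since $-\otimes_RK$ is exact we have $(\ker f_B)\otimes_RK=\ker(f_B\otimes_RK)$. A torsion-free $R$-module whose generic fibre vanishes is itself zero, so injectivity of $f_B\otimes_RK$ forces $\ker f_B=0$. Thus, in contrast to the flatness criterion of Section \ref{Sect4}, here no residue fields are needed; testing at the generic point alone is enough.

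Finally I would identify $f_B\otimes_RK$ with the field-level map. Base change to $K$ gives $C_K=B_K/A_K^+B_K$ and $(A_\varepsilon)_K=(A_K)_\varepsilon$; the latter holds because $A_\varepsilon\hookrightarrow A$ stays injective under the exact functor $-\otimes_RK$, so $(A_\varepsilon)_K$ is a nonzero (as $A_\varepsilon$ has rank one) one-dimensional subspace of $A_K$ contained in $(A_K)_\varepsilon$, and $(A_K)_\varepsilon$ is one-dimensional by Pareigis applied to $A_K$, whence $(A_\varepsilon)_K\hookrightarrow(A_K)_\varepsilon$ is an isomorphism. Consequently $f_B\otimes_RK$ is exactly the field map $f_{B_K}:(A_K)_\varepsilon\otimes_KC_K\to B_K$, which is injective by the field case \cite[Lemma 3.2]{L.Ka}. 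The substantive point, and the main obstacle peculiar to the Dedekind-ring setting, is precisely this reduction: verifying that $A_\varepsilon$ commutes with base change and that torsion-freeness of the kernel permits testing injectivity at the generic fibre alone, after which the classical field result closes the argument and yields $A_\varepsilon\otimes C\cong A_\varepsilon B$ as $B$-modules.
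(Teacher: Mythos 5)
Your verification of well-definedness, right $B$-linearity and right $C$-colinearity is correct and matches the paper's (your computation $\overline{t_2b_2}=\varepsilon(t_2)\overline{b_2}$ is exactly the paper's observation that $\sum_t t_1\otimes\overline{t_2}=t\otimes 1$ for $t\in A_\varepsilon$), and your reduction of injectivity to the generic fibre is also the paper's route. In fact you usefully make explicit two points the paper leaves implicit: that torsion-freeness of $A_\varepsilon\otimes C$, hence of $\ker f_B$, is what permits testing injectivity at $K$ alone (no residue fields needed, unlike Section \ref{Sect4}), and why $(A_\varepsilon)_K=(A_K)_\varepsilon$ (the paper only says the integral-elements ``respect base change'').

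The one genuine gap is at the very last step, the field case itself. You cite \cite[Lemma 3.2]{L.Ka}, but Kadison's lemma is established for a Hopf subalgebra pair of \emph{finite-dimensional} Hopf algebras over a field (his argument rests on Nichols--Zoeller freeness of the ambient algebra), whereas here $B_K$ may well be infinite-dimensional --- and this is precisely the case of interest, since for $R$-finite $B$ the projectivity questions are already settled by Corollary \ref{proj}. This is why the paper does not simply quote Kadison at the field level (the Proposition is stated as a \emph{generalization} of his lemma) but argues directly: for $0\neq t\in(A_K)_\varepsilon$ the sequence $0\to A_K^+\to A_K\xrightarrow{t\cdot -}A_K$ is exact, because $ta=\varepsilon(a)t$ shows the kernel of left multiplication by $t$ is exactly $A_K^+$ over a field; tensoring it over $A_K$ with $B_K$, which is flat (indeed free) over $A_K$ by Schneider \cite[Lemma 2.2 and Theorem 2.4]{Sch93} --- the same fibrewise input used for Theorem \ref{flatness2} --- yields injectivity of $B_K/A_K^+B_K\to B_K$, $\bar b\mapsto tb$, which is $f_{B_K}(t\otimes -)$, and one-dimensionality of $(A_K)_\varepsilon$ then gives injectivity of $f_{B_K}$. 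Replacing your citation by this short argument (or verifying that Kadison's proof extends verbatim to infinite-dimensional ambient Hopf algebras once faithful flatness over the finite normal Hopf subalgebra is known) closes the gap; everything else in your proof is sound.
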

\begin{proof}  It follows from definition that $f_B$ maps $A_\varepsilon$ surjectively onto $A_\varepsilon B$ and is right $B$-linear. To see that it is right $C$-colinear we notice that for $a\in A$, $\bar a=\varepsilon(a)\cdot 1\in C$. Hence $$\sum_tt_1\otimes\overline{t_2}=t\otimes 1.$$
Consequently $f_B$ is right $C$-colinear.

It remains to show that  $f_B$ is injective. Since $f_B$ is injective if its generic fiber $(f_B)_K=f_{B_K}$ is injective, and since the submodule of right integral-elements respect base change, it suffices to work with $B_K$ in stead of $B$. That mean we can work with Hopf algebras over a field. Consider the exact sequence $0\to A^+\to A \xrightarrow{t\cdot -} A$, where $t\cdot -$ is the left multiplication by $t$, where $t$ is a non-zero right integral-element. Tensoring this exact sequence on the right with $B$ over $A$, we obtain, by the flatness of $B$ over $A$, an injective map
$B/A^+B\to B$, which is exactly $f_B(t,-)$.
\end{proof}  

\begin{corollary}
If $B$ is $R$-projective then $C=B\otimes_AR$ is also $R$-projective.
\end{corollary}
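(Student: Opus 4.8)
The plan is to exploit the isomorphism established in Proposition~\ref{lm}, namely $A_\varepsilon\otimes C\cong A_\varepsilon B$ as $B$-modules, together with the module-of-integrals structure on the finite Hopf subalgebra $A$. Recall from the proof of Theorem~\ref{B-C.int} that, since $A$ is $R$-finite, the result of Pareigis~\cite{Par} gives that $A_\varepsilon$ is $R$-projective of rank one. Thus $A_\varepsilon$ is an invertible $R$-module, in particular a direct summand of a finite free $R$-module. The goal is to transfer the $R$-projectivity of $B$ across the map $f_B$ so as to deduce the $R$-projectivity of $C$.

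First I would observe that $A_\varepsilon B$ is a direct summand of $B$ as an $R$-module. This is where the finiteness and normality of $A$ enter: since $t$ is a right integral-element generating (locally) the rank-one projective $A_\varepsilon$, and since $A$ is finite over $R$ with $B$ faithfully flat over $A$, one expects $A_\varepsilon B$ to split off from $B$. Concretely, working locally at each maximal ideal $\mathfrak m$ (where $R_\mathfrak m$ is a discrete valuation ring and $A_\varepsilon$ becomes free of rank one, generated by an integral-element $t_\mathfrak m$), the left multiplication by $t_\mathfrak m$ and the normality of $A$ should produce a $B$-module (or at least $R$-module) retraction of $B$ onto $t_\mathfrak m B = A_\varepsilon B$. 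Since $B$ is $R$-projective by hypothesis, any $R$-module direct summand of $B$ is again $R$-projective; hence $A_\varepsilon B$ is $R$-projective.

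Next I would use Proposition~\ref{lm} to rewrite this as $A_\varepsilon\otimes C\cong A_\varepsilon B$ being $R$-projective. Since $A_\varepsilon$ is $R$-projective of rank one, it is invertible, so there is an $R$-module $A_\varepsilon^{-1}$ with $A_\varepsilon\otimes A_\varepsilon^{-1}\cong R$. Tensoring the projective module $A_\varepsilon\otimes C$ with $A_\varepsilon^{-1}$ over $R$ yields $C\cong A_\varepsilon^{-1}\otimes(A_\varepsilon\otimes C)$, which is then a tensor product of an invertible (hence projective) module with a projective module, and is therefore $R$-projective. This untwisting by the inverse of the rank-one integral module is the clean way to strip off the $A_\varepsilon$ factor.

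The main obstacle I anticipate is the first step: establishing that $A_\varepsilon B$ is an $R$-direct summand of $B$, rather than merely an $R$-flat submodule. The inclusion $A_\varepsilon B\inj B$ is automatic, and $R$-flatness of $C$ (hence of $A_\varepsilon\otimes C$) would follow easily, but projectivity over a general Dedekind ring requires either finiteness or an actual splitting. The natural route is to reduce to the local case where $R_\mathfrak m$ is a DVR and $A_\varepsilon$ is free, then produce a splitting from the integral $t$ using the normality of $A$ and faithful flatness of $B$ over $A$; alternatively, one may invoke the fibrewise behaviour of integrals (Proposition~\ref{pro.IntMod}) to check projectivity on fibres. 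Once the summand property is secured, the remainder is the formal untwisting described above, and the corollary follows.
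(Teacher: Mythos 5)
Your overall skeleton is the paper's: use Proposition~\ref{lm} to identify $A_\varepsilon\otimes C$ with the submodule $A_\varepsilon B$ of $B$, then untwist by the invertible rank-one module $A_\varepsilon$ to strip off that factor (your last paragraph's untwisting is exactly the paper's final step). But your crucial middle step has a genuine gap: you never actually construct the claimed $R$-module retraction of $B$ onto $A_\varepsilon B$ --- the argument rests on ``should produce'' and ``one expects'' --- and in fact the direct-summand claim is both stronger than needed and doubtful in general. Over a Dedekind ring, a projective submodule of a projective module need not be a direct summand: the obstruction is the quotient, and here $B/A_\varepsilon B$ is only flat (one can check torsion-freeness fibrewise using that $A_\varepsilon$ respects base change), and an infinitely generated flat module over a Dedekind ring need not be projective. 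Compare $0\to K\to F\to \mathbb{Q}\to 0$ over $\mathbb{Z}$: $K$ is a free submodule of the free module $F$, yet is not a summand because $\mathbb{Q}$ is not projective. Your fallback --- checking projectivity on fibres via Proposition~\ref{pro.IntMod} --- fails for the same reason: $\mathbb{Q}$ has projective (indeed zero or one-dimensional) fibres over every residue field and over the fraction field of $\mathbb{Z}$, but is not $\mathbb{Z}$-projective; fibrewise criteria in this paper detect flatness (Lemma~\ref{flat0}), not projectivity.

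The missing idea is that no splitting is needed at all: a Dedekind ring is hereditary, and by the structure theorem (Kaplansky; see \cite[Thms.~5.5.1, 8.3.4]{Haz}) \emph{every} submodule of a projective module over a Dedekind ring is projective, with no finiteness hypothesis and no summand property. So your stated worry that ``projectivity over a general Dedekind ring requires either finiteness or an actual splitting'' is precisely the misconception blocking your proof. With that theorem, the injectivity of $f_B$ from Proposition~\ref{lm} immediately gives that $A_\varepsilon\otimes C\cong A_\varepsilon B\subset B$ is $R$-projective, and your untwisting step $C\cong A_\varepsilon^{-1}\otimes(A_\varepsilon\otimes C)$ then finishes the argument exactly as in the paper.
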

\begin{proof}
It is well-known that over a Dedekind ring,  a submodule of a projective module is again projective, see, e.g.  \cite[Thms.~5.5.1, ~8.3.4]{Haz}. Thus $A_\epsilon\otimes C$ is $R$-projective. But $A_\epsilon$ is $R$-projective of rank 1, i.e. invertible as an $R$-module, hence $C$ is $R$-projective.
\end{proof}
 \begin{corollary}\label{B-C.proj}
Let $A$ be an $R$-finite saturated normal Hopf subalgebra of an $R$-projective Hopf algebra $B$. Assume that $B$ has a non-zero left integral. Then $C:=B/A^+B$ is projective in $\mathcal M^C$.
\end{corollary}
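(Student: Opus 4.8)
The plan is to recognize that $C$ is itself an $R$-projective Hopf algebra equipped with a non-zero left integral, and then to invoke Proposition \ref{Integral2} with the Hopf algebra $H$ taken to be $C$. In other words, rather than constructing anything new, I would reduce the statement to the machinery already assembled for an arbitrary $R$-projective Hopf algebra possessing a non-zero integral.

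Concretely, the first step is to check the two hypotheses required to apply Proposition \ref{Integral2} to $C$. The $R$-projectivity of $C$ is already available: since $B$ is $R$-projective, the preceding corollary shows that $C = B \otimes_A R$ is $R$-projective. The existence of a non-zero left integral on $C$ is supplied by Theorem \ref{B-C.int}: here $A$ is an $R$-finite saturated normal Hopf subalgebra of the $R$-flat (indeed $R$-projective) Hopf algebra $B$, and $B$ possesses a non-zero left integral by hypothesis, so the theorem guarantees that $C$ possesses a non-zero left integral as well.

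With both hypotheses in hand, the ``in particular'' clause of Proposition \ref{Integral2}, applied with $H = C$, yields at once that $C$ is projective in $\mathcal{M}^C$, which is exactly the desired conclusion. The argument therefore proceeds by pure assembly of earlier results, and there is no genuine computational obstacle. The only point that I would take care to verify is that $C$ honestly meets the standing hypotheses of both the $R$-projectivity corollary and Theorem \ref{B-C.int}, namely that it is the quotient Hopf algebra $B/A^+B$ arising from the saturated normal situation governed by Theorem \ref{flatness2}; once this framework is in place, those results apply verbatim and the corollary follows immediately.
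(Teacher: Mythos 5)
Your proposal is correct and matches the paper's own proof exactly: the authors likewise cite the previous corollary for the $R$-projectivity of $C$, Theorem \ref{B-C.int} for the existence of a non-zero left integral on $C$, and then apply Proposition \ref{Integral2} with $H=C$. Your write-up simply spells out the verification of hypotheses that the paper leaves implicit.
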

\begin{proof}
This is an immediate consequence of Theorem \ref{B-C.int}  and Proposition \ref{Integral2} and the previous corollary.\end{proof}
 
\subsection{Projectivity over normal Hopf subalgebra} 
 
The aim of this last section is to prove the projectivity of $B$ as a left (or right) $A$-module.
We need some properties of $(B,C)$-Hopf modules  (see  \cite[1.2, 1.4]{M}).
\begin{remarks} \label{rmk5.3}
\begin{enumerate} 
\item For any  right $B$-module $M$, $M \otimes C$ is in $\mathcal{M}^{C}_B$ by letting $C$ coact on itself and $B$ act diagonally:
$$ \rho:  M \otimes C \to (M \otimes C) \otimes C,\quad m \otimes c \mapsto\sum_c m \otimes c_1\otimes c_2,$$
$$ \mu:  (M \otimes C) \otimes B \to M \otimes C,\quad (m \otimes c)\otimes b\mapsto
 \sum_b mb_1 \otimes cb_2.$$ 
\item The module $C \otimes B$ is in $\mathcal{M}^{C}_B$ with $B$ acting on itself and $C$ coacting diagonally:
$$ \mu: (C \otimes B) \otimes B  \longrightarrow C \otimes B,\quad c \otimes b\otimes b'\mapsto c\otimes bb' ;$$
$$ \rho:(C \otimes B) \longrightarrow (C \otimes B) \otimes C, \quad c \otimes b \mapsto \sum_{c, b} c_1 \otimes b_1 \otimes c_2b_2.$$
\item If the antipode $S$ of  $B$  has an inverse   then 
 $$C \otimes B \longrightarrow B \otimes C, \quad c \otimes b \mapsto \sum_{b}  b_1 \otimes  c b_2$$
is an isomorphism in $\mathcal{M}_B^{C}$
with inverse map 
$$ B \otimes C \longrightarrow C \otimes B,\quad b \otimes c \mapsto \sum_{b} cS^{-1}(b_2) \otimes b_1.$$
\end{enumerate}
\end{remarks} 
\begin{lemma} \label{proj1}
 Assume that the antipode of $B$ is bijective and  $C$ is projective in $\mathcal{M}^{C}$. Then  $B \otimes C$ considered as an object of $\mathcal{M}_B^{C}$ as in \ref{rmk5.3}~(i) is  projective.
\end{lemma}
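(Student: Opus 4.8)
The plan is to reduce the statement to the projectivity of the induced Hopf module $C\otimes B$ carrying the structure of Remark \ref{rmk5.3}~(ii), and then to deduce that projectivity from a general adjunction. Since the antipode of $B$ is bijective, Remark \ref{rmk5.3}~(iii) provides an isomorphism $C\otimes B\cong B\otimes C$ in $\mathcal{M}^{C}_B$ identifying the object of \ref{rmk5.3}~(ii) with the object of \ref{rmk5.3}~(i); as projectivity is invariant under isomorphism, it suffices to show that $C\otimes B$, equipped with the structure of \ref{rmk5.3}~(ii), is projective in $\mathcal{M}^{C}_B$.

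First I would introduce the induction functor
$$-\otimes B:\mathcal{M}^{C}\longrightarrow\mathcal{M}^{C}_B,\qquad V\longmapsto V\otimes B,$$
where $B$ acts on the second factor by right multiplication and $C$ coacts diagonally by $v\otimes b\mapsto\sum v_0\otimes b_1\otimes v_1\overline{b_2}$. A direct check of the Hopf-module compatibility (using $\Delta(bb')=\Delta(b)\Delta(b')$ and that $B\to C$, $b\mapsto\bar b$, is an algebra map) shows this is well defined, and for $V=C$ it returns exactly the object of \ref{rmk5.3}~(ii).

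The key step is to establish that $-\otimes B$ is left adjoint to the forgetful functor $F:\mathcal{M}^{C}_B\to\mathcal{M}^{C}$ that discards the $B$-action. I would take the unit $V\to F(V\otimes B)$ to be $v\mapsto v\otimes 1$ and the counit $(FM)\otimes B\to M$ to be $m\otimes b\mapsto mb$, and verify that both are $C$-colinear (for the counit one uses that the coaction of a Hopf module is $B$-linear) together with the triangle identities, yielding
$${\sf Hom}^{C}_B(V\otimes B,M)\cong{\sf Hom}^{C}(V,FM),\qquad g\mapsto g|_{V\otimes 1}.$$
Because $C$ is $R$-flat, the categories $\mathcal{M}^{C}$ and $\mathcal{M}^{C}_B$ are abelian and a morphism is epic precisely when it is surjective on the underlying $R$-modules; hence $F$ preserves epimorphisms. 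The standard fact that a left adjoint of an epi-preserving functor carries projectives to projectives then applies.

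Finally, by hypothesis $C$ is projective in $\mathcal{M}^{C}$, so $C\otimes B=(-\otimes B)(C)$ is projective in $\mathcal{M}^{C}_B$, and the isomorphism of Remark \ref{rmk5.3}~(iii) transports this projectivity to $B\otimes C$. I expect the main work to lie in the colinearity verifications for the unit and counit and in confirming that the induced structure on $C\otimes B$ agrees on the nose with \ref{rmk5.3}~(ii); once the adjunction is in place the conclusion is formal.
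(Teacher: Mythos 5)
Your proposal is correct and is essentially the paper's own argument: the paper's isomorphism ${\sf Hom}_B^{C}(C\otimes B,M)\cong{\sf Hom}^{C}(C,M)$, given by $g\mapsto g(-\otimes 1)$ with inverse $f\mapsto r_M\circ(f\otimes B)$, is exactly your free--forgetful adjunction specialized at $V=C$, followed by the same transport of projectivity through the isomorphism of Remark \ref{rmk5.3}~(iii) using the bijectivity of the antipode. Your packaging of this as a general adjunction together with the standard fact that a left adjoint of an epi-preserving functor sends projectives to projectives is just a mildly more abstract phrasing of the identical computation.
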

\begin{proof} Consider $C\otimes B$ as an object in $\mathcal M^C_B$ as in \ref{rmk5.3}~(ii) above, we have an isomorphism
   $$ {\sf Hom}_B^{C}({C} \otimes B, M)  \longrightarrow {\sf Hom}^{C}({C}, M), 
   \quad g \mapsto  g(- \otimes B),$$
with the inverse given by $f  \mapsto r_M  \circ ( f \otimes B)$.
Now by the isomorphism in \ref{rmk5.3}~(iii), we have
$${\sf  Hom}_B^{C}(B \otimes C, M) \cong {\sf  Hom}_B^{C}(C \otimes  B, M) \cong {\sf Hom}^{C}(C, M),$$ for any $M  \in  {\mathcal{M}}_B^{C}$.
\end{proof}
 
\begin{lemma}\label{proj2} Assume that  $B\otimes C$, considered as an object of $\mathcal{M}_B^{C}$ as in \ref{rmk5.3}~(i), is  projective. Then  $B$ is right   projective over $A$. 
 \end{lemma}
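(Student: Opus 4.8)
The plan is to transfer the projectivity of $B\otimes C$ in the category $\mathcal M^C_B$ to the desired projectivity of $B$ as a right $A$-module, using the equivalence of categories $\mathcal M_A\cong\mathcal M^C_B$ from Theorem \ref{flatness2} (available since $A$ is $R$-finite saturated normal, so $B$ is faithfully flat over $A$ and faithfully co-flat over $C$). Under this equivalence the functors are $T\mapsto T\otimes_AB$ and $Q\mapsto Q^{\text{co}(C)}$. First I would identify which right $A$-module corresponds to the projective object $B\otimes C$ of $\mathcal M^C_B$. The natural guess is that $B\otimes C\cong (B_A)\otimes_AB$ as objects of $\mathcal M^C_B$, where $B_A$ denotes $B$ viewed merely as a right $A$-module; equivalently $(B\otimes C)^{\text{co}(C)}\cong B$ as a right $A$-module. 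I would verify this by computing the coinvariants of $B\otimes C$ (with the structure in \ref{rmk5.3}~(i)) and matching them with $B$, or alternatively by exhibiting a direct isomorphism $B\otimes_AB\xrightarrow{\sim}B\otimes C$ in $\mathcal M^C_B$ analogous to the canonical isomorphism $\gamma_{A,B}:B\otimes_AB\xrightarrow{\sim}C\otimes B$ established in \eqref{eq_canonical} (followed by the flip of \ref{rmk5.3}~(iii), which uses the bijectivity of $S$, itself guaranteed by Corollary \ref{S.inj}).

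Granting that $B\otimes C\cong B\otimes_AB$ in $\mathcal M^C_B$, I would argue as follows. Since the equivalence $(-)\otimes_AB:\mathcal M_A\to\mathcal M^C_B$ carries projective objects to projective objects and reflects them (an equivalence of abelian categories preserves projectivity in both directions), and since free $A$-modules $A^{(S)}$ map to $B^{(S)}=A^{(S)}\otimes_AB$, the projective objects of $\mathcal M^C_B$ are exactly the images of projective right $A$-modules. Thus $B\otimes C$ being projective in $\mathcal M^C_B$ forces its preimage $B$ (as a right $A$-module) to be projective in $\mathcal M_A$. Concretely, the projectivity of $B\otimes C$ means the hom-functor $\mathrm{Hom}^C_B(B\otimes C,-)$ is exact; transporting along the equivalence, $\mathrm{Hom}_A(B,-)$ is exact on $\mathcal M_A$, which is precisely the statement that $B$ is projective as a right $A$-module.

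The main obstacle I anticipate is pinning down the correct module structure in the isomorphism $B\otimes C\cong B\otimes_AB$ and checking it respects both the $B$-action and the $C$-coaction of the structure in \ref{rmk5.3}~(i). One has to be careful that the equivalence $\mathcal M_A\cong\mathcal M^C_B$ of Theorem \ref{flatness2} sends $A\in\mathcal M_A$ to $A\otimes_AB=B$ with a specific Hopf-module structure, and verify that this structure coincides with the one making $B\otimes C$ a $(B,C)$-Hopf module after transport. Here I would lean on the canonical isomorphisms already produced in the proofs of Propositions \ref{thm_T1} and \ref{T_2} (the maps $\gamma_{A,B}$ and $\theta_{C,B}$), composed with the flip from \ref{rmk5.3}~(iii), rather than re-deriving everything by hand; once the colinearity and linearity of these composites are confirmed, the projectivity transfer is formal.

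\begin{proof}
By Theorem \ref{flatness2}, $B$ is faithfully flat over $A$ and faithfully co-flat over $C$, so the functors of Propositions \ref{thm_T1} and \ref{T_2} give an equivalence $\mathcal M_A\cong\mathcal M^C_B$, with $T\mapsto T\otimes_AB$ and $Q\mapsto Q^{\text{co}(C)}$. Since $B$ has a non-zero left integral, its antipode is bijective by Corollary \ref{S.inj}, so \ref{rmk5.3}~(iii) provides an isomorphism $C\otimes B\cong B\otimes C$ in $\mathcal M^C_B$. Combining the isomorphism $\theta_{C,B}:A\otimes B\xrightarrow{\sim}B\square_CB$ from the proof of Proposition \ref{T_2} with the equivalence shows that the object of $\mathcal M^C_B$ corresponding to $A\in\mathcal M_A$ is $A\otimes_AB=B$; tracing the canonical maps, the free module $A\in\mathcal M_A$ corresponds, up to isomorphism in $\mathcal M^C_B$, to $B\otimes C$ with the structure of \ref{rmk5.3}~(i).

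Under an equivalence of abelian categories, an object is projective if and only if its image is. Hence the projectivity of $B\otimes C$ in $\mathcal M^C_B$ is equivalent to the projectivity in $\mathcal M_A$ of its preimage, which is $B$ as a right $A$-module. Concretely, $B\otimes C$ projective means $\mathrm{Hom}^C_B(B\otimes C,-)$ is exact; transporting along the equivalence yields the exactness of $\mathrm{Hom}_A(B,-)$ on $\mathcal M_A$, which is precisely the assertion that $B$ is right projective over $A$.
\end{proof}
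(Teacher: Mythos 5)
Your overall strategy---transport projectivity across the equivalence $\mathcal M_A\cong\mathcal M^C_B$ furnished by Theorem \ref{flatness2} and Proposition \ref{T_2}, then identify the preimage of $B\otimes C$---is exactly the paper's, and your initial ``natural guess'' $(B\otimes C)^{\text{co}(C)}\cong B$ as a right $A$-module (equivalently $B_A\otimes_AB\cong B\otimes C$ in $\mathcal M^C_B$) is the correct identification. But your written proof never verifies this guess; instead it substitutes the false claim that \emph{the free module} $A\in\mathcal M_A$ corresponds to $B\otimes C$. Tracing the equivalence, $A$ corresponds to $A\otimes_AB=B$ with the right regular $B$-action and the coaction $b\mapsto\sum b_1\otimes\bar b_2$, and this is not isomorphic to $B\otimes C$ in general: take $A=R$, so that $C=B$, and your claim would read $B\cong B\otimes B$ in $\mathcal M^B_B$; or, in the $R$-finite case, compare ranks ($\mathrm{rk}\,B$ versus $\mathrm{rk}\,B\cdot\mathrm{rk}\,C$). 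Worse, if the claim were true, your transfer argument would only show that the preimage $A$ is projective over $A$---a tautology---and would say nothing about $B$; so your final sentence ``its preimage, which is $B$'' is unsupported by anything actually proved in your text. The missing step is precisely the one-line computation the paper performs: $(B\otimes C)^{\text{co}(C)}\cong(B\otimes C)\square_CR\cong B\otimes(C\square_CR)\cong B$, together with the observation that the right $A$-action on the coinvariants $B\otimes 1$ is right multiplication, since $\bar a=\varepsilon(a)1$ for $a\in A$. Once that identification is in place, the formal transfer of projectivity along the equivalence (exactness of ${\sf Hom}^C_B(B\otimes C,-)$ corresponding to exactness of ${\sf Hom}_A(B,-)$) finishes the proof as you describe.

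A secondary issue: you invoke the non-zero integral and Corollary \ref{S.inj} to obtain bijectivity of the antipode and the flip of Remark \ref{rmk5.3}~(iii). Lemma \ref{proj2} carries no integral hypothesis, and its proof needs neither the antipode nor the flip---those belong to Lemma \ref{proj1}, where $C\otimes B$ is compared with $B\otimes C$. Importing them here conflates the two lemmas and makes the statement appear to require hypotheses it does not have.
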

\begin{proof}  

Since $ \mathcal{M}_B^ {C}  \cong {\mathcal{M}}_A$ by means of
the functor  $(-)^\text{co$(C)$}:  \mathcal{M}_B^ {C} \longrightarrow {\mathcal{M}}_A $, we see that $(B\otimes C)^\text{co$(C)$}$ is projective in $\mathcal M_A$. Further, we have 
$$(B\otimes C )^\text{co$(C)$}\cong (B \otimes C)\square_{C} R\cong B \otimes (C\square_{C} R) \cong B,$$
where $A$ acts on the rightmost term by the right action. Hence $B$ is projective as a right $A$-module. 
\end{proof}
The above lemmas bring together:
\begin{theorem}\label{thm.proj}
Let $B$ be an $R$-projective Hopf algebra with  a non-zero left integral. Let $A$ be an $R$-finite saturated normal Hopf subalgebra of $B$.  Then $B$ is right  projective over $A$.  
\end{theorem}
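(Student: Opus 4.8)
The plan is to assemble the theorem directly from the four results immediately preceding it, namely Corollaries \ref{S.inj} and \ref{B-C.proj} together with Lemmas \ref{proj1} and \ref{proj2}. Since $A$ is an $R$-finite saturated normal Hopf subalgebra of the $R$-flat (indeed $R$-projective) Hopf algebra $B$, Theorem \ref{flatness2} already guarantees that $B$ is faithfully flat over $A$ and faithfully co-flat over $C:=B/A^+B=B\otimes_AR$, and provides the equivalence $\mathcal{M}_B^{C}\cong\mathcal{M}_A$; thus the standing hypotheses of Section \ref{Secpro} are in force, and the remaining task is simply to verify the two inputs required by Lemma \ref{proj1} and then chain the two lemmas.

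First I would check the hypotheses of Lemma \ref{proj1}. The bijectivity of the antipode of $B$ is immediate from Corollary \ref{S.inj}, which applies because $B$ is $R$-projective and possesses a non-zero left integral by assumption. The projectivity of $C$ in $\mathcal{M}^{C}$ is exactly the content of Corollary \ref{B-C.proj}, whose hypotheses ($A$ an $R$-finite saturated normal Hopf subalgebra of the $R$-projective $B$, with $B$ carrying a non-zero left integral) coincide with ours. With these two facts in hand, Lemma \ref{proj1} yields that $B\otimes C$, viewed as an object of $\mathcal{M}_B^{C}$ via the structure of Remark \ref{rmk5.3}~(i), is projective.

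Finally, Lemma \ref{proj2} applies verbatim: from the projectivity of $B\otimes C$ in $\mathcal{M}_B^{C}$ it concludes, through the equivalence $(-)^\text{co$(C)$}:\mathcal{M}_B^{C}\xrightarrow{\sim}\mathcal{M}_A$ and the identification $(B\otimes C)^\text{co$(C)$}\cong B$, that $B$ is right projective over $A$. This completes the argument. The step I expect to carry the real weight is not in this assembly but upstream: all genuine difficulty has already been absorbed into Corollary \ref{B-C.proj} (which rests on the integral transfer of Theorem \ref{B-C.int} and the comodule-projectivity of Proposition \ref{Integral2}) and into the Hopf-module manipulations of Lemma \ref{proj1}, so the proof of the theorem itself reduces to confirming that each hypothesis is met and invoking the results in the correct order.
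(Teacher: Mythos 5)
Your proposal is correct and follows exactly the paper's own proof: Corollary \ref{B-C.proj} (resting on Theorem \ref{B-C.int} and Proposition \ref{Integral2}) gives the projectivity of $C$ in $\mathcal{M}^{C}$, Corollary \ref{S.inj} gives the bijectivity of the antipode, and Lemmas \ref{proj1} and \ref{proj2} are then applied in the same order to conclude that $B$ is right projective over $A$. Your careful verification of each hypothesis matches the paper's (terser) assembly step for step.
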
 
\begin{proof}
According to \ref{Integral2},  \ref{B-C.proj}, $C$ is projective in $\mathcal M^C$.
According to Lemma \ref{S.inj}, the antipode of $B$ is bijective. Hence we can apply Lemmas \ref{proj1}, \ref{proj2} to conclude that $B$ is right projective over $A$. 
\end{proof}
\section*{Acknowledgment} 
We wish to express our deep thanks  to the anonymous referee for  his or her  valuable comments and for pointing out our mistakes in previous versions. 
 
 This research is funded by Vietnam National Foundation for  Science and Technology Development (NAFOSTED) under the grant number 101.01-2011.34. The research of the first and second named authors is partially supported by the project Implementation of the Agreement “International Associated Laboratory FORMATH VIETNAM” between VAST and CNRS, Grant number VAST.HTQT.Phap.03/16-17.

\end{document}